\newtheorem{theorem}{Theorem}
\newtheorem{definition}[theorem]{Definition}
\newtheorem{lemma}[theorem]{Lemma}
\newtheorem{proposition}[theorem]{Proposition}
\newtheorem{corollary}[theorem]{Corollary}
	\newtheorem{example}[theorem]{Example}
	\newtheorem{remark}[theorem]{Remark}
\newenvironment{proof}{    
	\noindent
	\textbf{Proof.}}{
	\hfill $\Box$
	\vspace{3mm}
}
\numberwithin{equation}{section}
\newcommand{\N}{\mathbb{N}} 
\newcommand{\R}{\mathbb{R}} 
\newcommand{\C}{\mathbb{C}} 
\title{On the spectrum of isomorphisms defined on the space of smooth functions which are flat at 0}
\author{ Enrique Jord\'a \footnote{Instituto Universitario de Matem\'atica Pura y Aplicada IUMPA, Universitat Polit\`ecnica de Val\`encia, Camino de Vera, s/n, E-46022 Valencia, Spain.}}
\begin{document}

	\maketitle
	
	\begin{abstract}
In this note we study the spectrum and the Waelbroeck spectrum of the derivative operator composed with isomorphic multiplication operators defined in the space of smooth functions in $[0,1]$ which are flat at 0. 

\end{abstract}

\section{Introduction and preliminaries}

\subsection{Introduction}

The spectrum of a continuous linear operator $T$ defined on a locally convex space $X$ is defined in an analogous way as in the case when $X$ is a Banach space. Given $T\in \mathcal{L}(X)$, (here $\mathcal{L}(X)$ stands for the continuous linear operators on $X$), the resolvent  of $T$,  denoted by $\varrho(T)$, is defined as the subset of $\C$ formed by those $\lambda$ such that  $\lambda I-T$  admits a continuous linear inverse $(\lambda I- T)^{-1}$. For $\lambda\in \varrho(T)$ we denote, as usual, $R(\lambda,T)=(\lambda I-T)^{-1}\in L(X)$. When $X$ is a Fr\'echet space, $\lambda I-T$ is an isomorphism if and only if $\lambda I-T$ is bijective. The spectrum of $T$ is defined as $\sigma(T):=\C\setminus \varrho(T)$. The point spectrum of $T$ is defined as $\{\lambda\in\C: T(x)=\lambda x \text{ for some } x\neq 0\}$. Due to the open mapping theorem, when $X$ is a Fr\'echet space, if $\lambda\in \C\setminus \sigma_p(T)$, then $\lambda\in \varrho(T)$ if and only if $\lambda I-T$ is surjective. 
Contrary to what happens on the Banach spaces, the spectrum of an operator defined on a Fr\'echet space could be empty, or unbounded.  Several authors consider the Waelbroeck spectrum of the operator, as a natural way in order to get holomorphy in the resolvent map. The Waelbroeck resolvent $\varrho^*(T)$ is defined as the subset formed for those $\lambda\in \varrho(T)$ such that there is a neighbourhood $V_\lambda$ of $\lambda$ contained in $\varrho(T)$ such that $\{R(\lambda,T):\ \lambda\in V_\lambda\}$ is an equicontinuous subset of $\mathcal{L}(X)$. The Waelbroeck spectrum $\sigma^*(T)$ of $T$ is defined as $\C\setminus \varrho^*(T)$ (see \cite{Vasilescu}). From the definition it follows immediately $\overline{\sigma(T)}\subseteq \sigma^*(T)$.  The inclusion can be strict, as it can be checked in \cite[Remark 3.5 (vi)]{abr1}. The example, stated without proof in \cite[Example 2]{maeda}, is the Volterra  operator in the space 

$$C_0^\infty([0,1]):=\{f\in C^\infty([0,1]):\ f^{(k)}(0)=0\ \text{ for all }k\in\N_0\}.$$

\noindent  This space is endowed with its natural topology, which is generated by the norms
$\|f\|_n:=\sup_{1\leq j \leq n} \{|f^{(j)}(x)|:\ x\in [0,1],\ 0\leq j\leq n\}.$ These family of norms makes $C_0^\infty([0,1])$ a Fr\'echet nuclear space. More explicitly, \cite[Remark 3.5 (vi)]{abr1} can be stated  as follows:

\begin{proposition}
\label{isomorphism}
The following operators are surjective  isomorphisms in $C_0^\infty([0,1])$
\begin{itemize}
\item[(a)]The derivative operator $D:C_0^\infty[0,1]\to C_0^\infty[0,1]$, $f\mapsto f'$ is an isomorphism which satisfies $\sigma(D)=\sigma^*(D)=\emptyset$.
\item[(b)] The inverse of $D$ is the Volterra operator $V:C_0^\infty([0,1])\to C_0^\infty([0,1])$, $f\mapsto V(f)(x):=\int_0^xf(t)dt$, $x\in [0,1]$, which satisfies $\sigma(V)=\emptyset$, $\sigma^*(V)=\{0\}$.
\end{itemize}
\end{proposition}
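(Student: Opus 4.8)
The plan is to first record the elementary structural facts and dispose of the point spectra, then compute both ordinary spectra via explicit resolvents, and finally treat the two Waelbroeck spectra through uniform seminorm estimates, the delicate point being a blow-up argument that places $0$ in $\sigma^*(V)$. I would begin by noting that $D$ and $V$ map $C_0^\infty([0,1])$ into itself and are continuous: the estimates $\|Df\|_n\le\|f\|_{n+1}$ and $\|Vf\|_n\le\|f\|_{n-1}+\|f\|_0$ are immediate from $(Df)^{(j)}=f^{(j+1)}$ and $(Vf)^{(j)}=f^{(j-1)}$ for $j\ge1$. Since $f(0)=0$ for $f\in C_0^\infty([0,1])$, one checks $DVf=f$ and $VDf=f$, so $D$ and $V$ are mutually inverse isomorphisms. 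Neither has eigenvalues: $f'=\lambda f$ or $Vf=\lambda f$ forces $f=ce^{\lambda x}$ (resp.\ $f=ce^{x/\lambda}$, and $f=0$ if $\lambda=0$), and flatness $f(0)=0$ kills the constant, so $\sigma_p(D)=\sigma_p(V)=\emptyset$.

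For $\sigma(D)=\emptyset$ I would solve $(\lambda I-D)f=g$ directly as the linear ODE $f'-\lambda f=-g$ with $f(0)=0$, obtaining the explicit candidate $R(\lambda,D)g(x)=-\int_0^x e^{\lambda(x-t)}g(t)\,dt$. That this $f$ lies in $C_0^\infty([0,1])$ follows from the recursion $f^{(k+1)}(0)=\lambda f^{(k)}(0)-g^{(k)}(0)=0$, started from $f(0)=0$; hence $\lambda I-D$ is bijective for every $\lambda$, and since $X$ is Fréchet it is an isomorphism. For $V$ I would use the factorisation $\lambda I-V=\lambda\,V\,(D-\lambda^{-1}I)$ valid for $\lambda\neq0$: as $V$ is an isomorphism and $\lambda^{-1}\notin\sigma(D)=\emptyset$, the right-hand side is an isomorphism, so $\lambda\in\varrho(V)$; and $0\in\varrho(V)$ because $-V$ is invertible. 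Thus $\sigma(V)=\emptyset$, and the factorisation also yields $R(\mu,V)g(x)=\mu^{-1}\int_0^x e^{\mu^{-1}(x-t)}g'(t)\,dt$ for $\mu\neq0$.

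The Waelbroeck statements reduce to uniform equicontinuity on suitable neighbourhoods. Differentiating the resolvent equations gives the recursions $f^{(j+1)}=\mu f^{(j)}-g^{(j)}$ for $f=R(\mu,D)g$, and $f^{(j+1)}=\mu^{-1}\bigl(f^{(j)}+g^{(j+1)}\bigr)$ for $f=R(\mu,V)g$. On a disc $\{|\mu|\le M\}$ the first recursion, together with $\|R(\mu,D)g\|_0\le e^{M}\|g\|_0$, yields by induction $\|R(\mu,D)g\|_n\le C(M,n)\|g\|_n$ uniformly in $\mu$; covering each $\lambda$ by such a disc gives $\sigma^*(D)=\emptyset$. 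For $V$, on a disc $V_\lambda$ around $\lambda\neq0$ bounded away from $0$ by $r>0$, the bound $|e^{\mu^{-1}(x-t)}|\le e^{1/r}$ for $0\le t\le x\le1$ gives $\|R(\mu,V)g\|_0\le r^{-1}e^{1/r}\|g\|_1$, and the second recursion with $|\mu^{-1}|\le r^{-1}$ propagates this to $\|R(\mu,V)g\|_n\le C(r,n)\|g\|_{n+1}$ uniformly over $V_\lambda$; hence every $\lambda\neq0$ lies in $\varrho^*(V)$.

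It remains to prove $0\in\sigma^*(V)$, which I expect to be the main obstacle, since here the resolvent exists for all $\mu\neq0$ yet the family must fail to be equicontinuous near $0$; heuristically this is the divergence of the Neumann series $R(\mu,V)=-\sum_{k\ge0}\mu^k D^{k+1}$ on functions with fast-growing derivatives. To make it rigorous I would fix $h\in C_0^\infty([0,1])$ with $h\ge0$ and $h>0$ on $[1/2,3/4]$, set $g:=V(h)$ (so $g'=h$ and $\|g\|_m$ is a fixed finite number for each $m$), and evaluate along $\mu=1/n$: $R(1/n,V)g(1)=n\int_0^1 e^{n(1-t)}h(t)\,dt\ge n\,m_0\int_{1/2}^{3/4}e^{n(1-t)}\,dt=m_0\bigl(e^{n/2}-e^{n/4}\bigr)\to\infty$, where $m_0=\min_{[1/2,3/4]}h>0$. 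Thus $\|R(1/n,V)g\|_0\to\infty$ while all $\|g\|_m$ stay bounded, so no estimate $\|R(\mu,V)g\|_0\le C\|g\|_m$ can hold uniformly on any neighbourhood of $0$; equicontinuity fails and $0\in\sigma^*(V)$. Combined with the previous paragraph this yields $\sigma^*(V)=\{0\}$.
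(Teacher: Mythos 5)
Your proposal is correct, and it actually supplies something the paper does not contain: Proposition \ref{isomorphism} is stated there as an explicit reformulation of \cite[Remark 3.5 (vi)]{abr1} (going back to an example of Maeda stated without proof), so the paper offers no argument for it at all, and there is no internal proof to compare against. Your argument is self-contained and checks out at every step: the mutual inversion $DV=VD=I$ (where $VD=I$ uses flatness $f(0)=0$); the explicit resolvent $R(\lambda,D)g(x)=-\int_0^x e^{\lambda(x-t)}g(t)\,dt$ together with the recursion $f^{(k+1)}(0)=\lambda f^{(k)}(0)-g^{(k)}(0)$, which keeps the solution in $C_0^\infty([0,1])$; the factorization $\lambda I-V=\lambda V(D-\lambda^{-1}I)$ reducing $\sigma(V)$ to $\sigma(D)$; the uniform seminorm bounds, which are indeed the correct criterion for equicontinuity on a Fr\'echet space; and the blow-up $R(1/n,V)g(1)\ge m_0\bigl(e^{n/2}-e^{n/4}\bigr)\to\infty$ for the fixed $g=V(h)$, which (since equicontinuous sets are pointwise bounded) correctly excludes equicontinuity on every neighbourhood of $0$. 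It is worth seeing how your route relates to the machinery the paper develops for its main results: your factorization is a hands-on instance of Proposition \ref{inverse}, which would let you deduce $\sigma(V)=\emptyset$ and $\sigma^*(V)\subseteq\{0\}$ immediately from the statements for $D$, shortening your third and fifth paragraphs; conversely, your evaluation-at-$1$ blow-up for $0\in\sigma^*(V)$ is precisely the device the paper uses later, in the proof of Theorem \ref{wes}, to show $0\in\sigma^*(C)$ for the Ces\`aro operator (there via $\langle\delta_1,\cdot\rangle\ge 1/\lambda$ along $0<\lambda<1$), so your proof fits naturally into the paper's toolkit. The only cosmetic blemishes are the undefined index in $\|Vf\|_n\le\|f\|_{n-1}+\|f\|_0$ when $n=0$, and that the final step deserves one explicit sentence noting that equicontinuity of $\{R(\mu,V):\mu\in U\}$ would force boundedness of the orbit $\{R(\mu,V)g:\mu\in U\}$ of your fixed $g$; neither affects correctness.
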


The study of the spectrum of operators defined on Fr\'echet spaces or more general locally convex spaces has been an object of research in the last years, see e.g. \cite{abr1,abr2,abr6,abr7,b2,fgj1,fgj2,k,r}.

Several of the aforementioned references are devoted to the study of the Ces\`{a}ro operator in spaces of functions. Our main motivation is \cite{abr2,abr25}, where Albanese, Bonet and Ricker showed that the Ces\`{a}ro operator $C$ defined on $C^\infty(\R_+)$ satisfies $\sigma(C)=\sigma_p(C)=\{1/n:\ n\in\N\}$ and $\sigma^*(C)=\overline{\sigma(C)}$. We study a class of operators which includes the Ces\`{a}ro operator defined in the space $C_0^\infty([0,1])$, whose spectrum has been recently characterized by Albanese in \cite{a}.

\subsection{Spectrum of operators on locally convex spaces}   
\label{lcs}
 In this note we are concerned in this note with spectra of isomorphisms on Fr\'echet spaces. In the next proposition we include first a basic result which compares spectra and Waelbroeck spectra of $T$ and $T^{-1}$ defined on a  locally convex space $X$. It is a particular case of \cite[Theorem 1.1]{abr2}, due to Albanese, Bonet and Ricker.
\begin{proposition}
\label{inverse}
Let $X$ be a  locally convex space and $T\in \mathcal{L}(X)$ be an isomorphism. $\sigma(T^{-1})=\{\lambda^{-1}: \ \lambda\in \sigma(T)\}$ and $\sigma^*(T^{-1})\setminus\{0\}=\{\lambda^{-1}: \ \lambda\in \sigma^*(T)\setminus\{0\}\}$.
\end{proposition}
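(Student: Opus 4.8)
The plan is to base everything on the elementary operator identity
\[
\lambda I - T = -\lambda\,T\,(\lambda^{-1}I - T^{-1}) = -\lambda\,(\lambda^{-1}I - T^{-1})\,T,
\]
valid for all $\lambda\in\C\setminus\{0\}$, which one verifies at once by expanding and using $TT^{-1}=T^{-1}T=I$. Since $T$ and $T^{-1}$ are isomorphisms, $0\in\varrho(T)\cap\varrho(T^{-1})$, so both spectra are contained in $\C\setminus\{0\}$, and the inversion $\phi(\lambda)=\lambda^{-1}$ is an involutive homeomorphism of $\C\setminus\{0\}$ carrying all the relevant sets to one another.

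First I would treat $\sigma$. For $\lambda\neq 0$ the factor $-\lambda T$ is an isomorphism, so the identity shows that $\lambda I - T$ is invertible in $\mathcal{L}(X)$ if and only if $\lambda^{-1}I - T^{-1}$ is; that is, $\lambda\in\varrho(T)\Leftrightarrow\lambda^{-1}\in\varrho(T^{-1})$, equivalently $\lambda\in\sigma(T)\Leftrightarrow\lambda^{-1}\in\sigma(T^{-1})$. As $0$ lies in neither spectrum, this yields $\sigma(T^{-1})=\{\lambda^{-1}:\lambda\in\sigma(T)\}$ with no exceptional point. Inverting the identity also records the resolvent relation
\[
R(\mu,T^{-1}) = -\mu^{-1}\,R(\mu^{-1},T)\,T,\qquad \mu\in\varrho(T^{-1})\setminus\{0\},
\]
which is the engine for the Waelbroeck part.

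For the Waelbroeck spectrum I would argue that $\phi$ carries $\varrho^*(T)\setminus\{0\}$ onto $\varrho^*(T^{-1})\setminus\{0\}$. Take $\lambda_0\in\varrho^*(T)\setminus\{0\}$ and choose a bounded neighbourhood $U$ of $\lambda_0$ with $0\notin U$, $U\subseteq\varrho(T)$, and $\{R(\lambda,T):\lambda\in U\}$ equicontinuous. Then $W:=\phi(U)$ is a neighbourhood of $\lambda_0^{-1}$ inside $\varrho(T^{-1})$, and for $\mu\in W$, writing $\lambda=\mu^{-1}\in U$, the resolvent relation gives $R(\mu,T^{-1})=-\lambda\,R(\lambda,T)\,T$. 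It then remains to check that $\{R(\mu,T^{-1}):\mu\in W\}$ is equicontinuous, for which I would use two stability facts: that an equicontinuous family $\{S_i\}$ stays equicontinuous when each $S_i$ is multiplied by a scalar $c_i$ with $\sup_i|c_i|<\infty$ (here the scalars $-\lambda$ have modulus bounded since $U$ is bounded, and one uses that $0$-neighbourhoods may be taken balanced), and that it stays equicontinuous after composition on the right with a fixed continuous operator (here $T$), by pulling an absorbing $0$-neighbourhood back through the continuity of $T$. This gives $\lambda_0^{-1}\in\varrho^*(T^{-1})$; the symmetric argument with $T$ and $T^{-1}$ interchanged yields the reverse inclusion, so $\phi(\varrho^*(T)\setminus\{0\})=\varrho^*(T^{-1})\setminus\{0\}$. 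Passing to complements within $\C\setminus\{0\}$, where $\phi$ sends complements to complements, produces the claimed $\sigma^*(T^{-1})\setminus\{0\}=\{\lambda^{-1}:\lambda\in\sigma^*(T)\setminus\{0\}\}$.

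I expect the equicontinuity transfer to be the only real obstacle: the scalar factor $-\lambda$ and the trailing operator $T$ are exactly what must be controlled uniformly in $\mu$, and it is precisely the presence of this $T$ that breaks the argument at $\mu=0$. This is not a mere technicality --- the Volterra example in Proposition~\ref{isomorphism}, where $\sigma^*(D)=\emptyset$ but $\sigma^*(V)=\sigma^*(D^{-1})=\{0\}$, shows that the endpoint $0$ genuinely behaves differently for $T$ and $T^{-1}$, which is why the Waelbroeck statement must exclude it.
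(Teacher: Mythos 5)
Your proposal is correct, but there is nothing in the paper to compare it against line by line: the paper gives no proof of this proposition at all, stating it as a particular case of \cite[Theorem 1.1]{abr2}, due to Albanese, Bonet and Ricker. Your argument therefore supplies what the paper delegates to a reference, and it does so soundly. The factorization $\lambda I - T = -\lambda T(\lambda^{-1}I - T^{-1})$ is verified at once, and since $0\in\varrho(T)\cap\varrho(T^{-1})$ (the operator $-T$ has continuous inverse $-T^{-1}$), it yields the clean equivalence $\lambda\in\sigma(T)\Leftrightarrow\lambda^{-1}\in\sigma(T^{-1})$ with no exceptional point, hence the first identity. The induced resolvent relation $R(\mu,T^{-1})=-\mu^{-1}R(\mu^{-1},T)T$ is exactly the right engine for the Waelbroeck part, and the two transfer facts you lean on are both valid in an arbitrary locally convex space and correctly deployed: multiplication of an equicontinuous family by scalars of uniformly bounded modulus (legitimate because $0$-neighbourhoods can be taken balanced, and because you chose $U$ bounded), and right composition with the single fixed operator $T$ (legitimate by pulling a $0$-neighbourhood back through the continuity of $T$). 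Passing to complements inside $\C\setminus\{0\}$, where inversion is an involutive bijection, then gives $\sigma^*(T^{-1})\setminus\{0\}=\{\lambda^{-1}:\lambda\in\sigma^*(T)\setminus\{0\}\}$ as claimed. Your closing remark is also on target: the exclusion of $0$ is genuinely necessary, exactly as the paper's own example shows via Proposition~\ref{isomorphism}, where $\sigma^*(D)=\emptyset$ while $\sigma^*(D^{-1})=\sigma^*(V)=\{0\}$, and your identification of the trailing factor $T$ in the resolvent relation as the precise point where uniformity fails at $\mu=0$ is a nice diagnostic that the paper does not spell out.
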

As a consequence of Proposition \ref{inverse},  if $T$ is an isomorphism and $\lambda\neq 0$ is an accumulation point in $\sigma^*(T)\setminus\{0\}$ if and only if $\lambda^{-1}$ is an accumulation point of $\sigma^*(T^{-1})\setminus\{0\}$. For $\lambda=0$ we see below that nothing  can be asserted. When $X$ is a Banach space and $T$ is an isomorphism on $X$ then $0$ is neither in the (Waelbroeck) spectrum of $T$ nor in that of $T^{-1}$. In the case of Fr\'echet spaces we see that $0$ can appear in the Waelbroeck spectrum of an isomorphism and in that of its inverse, and that when it appears it can be both,    an isolated  point or  an accumulation point.  In the next example, we consider the space $\omega=\C^{\N}$ of sequences of complex numbers endowed with the product topology. The proof  relies on the fact that, if $X,Y$ are locally convex spaces, $T\in \mathcal{L}(X)$ and $S\in\mathcal{L}(Y)$, and we consider the direct sum $T\oplus S\in \mathcal{L}(X\oplus Y)$, then $\sigma_p(T\oplus S)=\sigma_p(T)\cup \sigma_p(S)$, $\sigma(T\oplus S)=\sigma(T)\cup \sigma(S)$ and $\sigma^*(T\oplus S)=\sigma^*(T)\cup \sigma^*(S)$.

\begin{example}
Let $T:\omega\to \omega$, $(x_n)\mapsto (nx_n)$. Then $T$ is an isomorphism, $T^{-1}:\omega\to \omega, (x_n)\mapsto (\frac1n x_n)$, $\sigma(T)=\sigma_p(T)=\sigma^{*}(T)=\N$ and 
$\sigma(T^{-1})=\sigma_p(T^{-1})=\{\frac1n:\ n\in\N\}$ and $\sigma^*(T^{-1})=\overline{\sigma(T^{-1})}=\sigma(T^{-1})\cup \{0\}$.

 The operators $D$ and $V$ defined on $C_0^\infty([0,1])$ are the same as in Proposition \ref{isomorphism}.
\begin{itemize}
\item[(a)] $S:=T\oplus T^{-1}\in\mathcal{L}(\omega\oplus \omega)$  satisfies $\sigma(S)=\sigma(S^{-1})=\sigma_p (S)=\sigma_p(S^{-1})=\N\cup \{\frac1n:\ n\in\N\}$ and  $\sigma^*(S)=\sigma(S^{-1})=\overline{\sigma(S)}=\sigma(S)\cup\{0\}$. 
\item[(b)] $S:=D\oplus V\in\mathcal{L}(C_0^\infty([0,1])\oplus C_0^\infty([0,1]))$ satisfies $\sigma(S)=\sigma(S^{-1})=\emptyset$ and $\sigma^*(S)=\sigma^*(S^{-1})=\{0\}$.

\item[(c)]  $S:=D\oplus T\in \mathcal{L}(C_0^\infty([0,1])\oplus \omega)$ satisfies $\sigma(S)=\sigma^*(S)=\N$, $\sigma(S^{-1})=\{\frac1n:\ n\in\N\}$ and $\sigma^*(S^{-1})=\overline{\sigma(S^{-1})}=\sigma(S^{-1})\cup\{0\}$.

\item[(d)] $S:=V \oplus  T\in \mathcal{L}(C_0^\infty([0,1])\oplus \omega)$ satisfies $\sigma(S)=\N$, $\sigma^*(S)=\N_0$, $\sigma(S^{-1})=\{\frac1n:\ n\in\N\}$ and $\sigma^*(S^{-1})=\overline{\sigma(S^{-1})}=\sigma(S^{-1})\cup\{0\}$.

\end{itemize}

\end{example}

The next result is stated for Banach algebras in \cite[Exercise 7.3.7]{Conway}
\begin{proposition}
\label{com}
Let $X$ be a locally convex space and let $A,B\in \mathcal{L}(X)$. Then $\sigma(AB)\cup \{0\}=\sigma(BA)\cup\{0\}$. 
\end{proposition}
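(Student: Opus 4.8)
The plan is to reduce the statement to the equivalence $\varrho(AB)\setminus\{0\}=\varrho(BA)\setminus\{0\}$ and to establish this via the classical algebraic resolvent identity, carried over to the locally convex setting. Note first that the claimed identity $\sigma(AB)\cup\{0\}=\sigma(BA)\cup\{0\}$ is equivalent to the assertion that, for every $\lambda\neq 0$, the operator $\lambda I-AB$ admits a continuous linear inverse if and only if $\lambda I-BA$ does. Since the roles of $A$ and $B$ are symmetric, it suffices to prove one implication.

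So I would fix $\lambda\neq 0$ and assume $\lambda\in\varrho(AB)$, i.e. that there exists $R:=R(\lambda,AB)=(\lambda I-AB)^{-1}\in\mathcal{L}(X)$. I would then exhibit the explicit candidate inverse
$$S:=\frac1\lambda\bigl(I+BRA\bigr),$$
and verify directly, using only the algebraic relations $(\lambda I-AB)R=R(\lambda I-AB)=I$, that $(\lambda I-BA)S=S(\lambda I-BA)=I$. The computation is short: for instance $(\lambda I-BA)(I+BRA)=\lambda I+B\bigl((\lambda I-AB)R-I\bigr)A=\lambda I$, and the opposite order is analogous.

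The essential point for the locally convex framework is that $S\in\mathcal{L}(X)$: since $A,B\in\mathcal{L}(X)$ and $R\in\mathcal{L}(X)$, the composition $BRA$ is again a continuous linear operator, hence so is $S$. Thus $S$ is a genuine continuous linear inverse of $\lambda I-BA$, which gives $\lambda\in\varrho(BA)$. Exchanging $A$ and $B$ yields the reverse inclusion, and together they give $\varrho(AB)\setminus\{0\}=\varrho(BA)\setminus\{0\}$, equivalently the asserted identity of spectra up to the point $0$.

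I do not anticipate a serious obstacle. Because the notion of resolvent on a locally convex space requires only a continuous linear inverse in $\mathcal{L}(X)$, with no quantitative control (unlike the Waelbroeck resolvent), the purely algebraic argument valid in unital Banach algebras transfers verbatim. The only point demanding attention is to confirm that the candidate $S$ lies in $\mathcal{L}(X)$, and this is immediate from the continuity of $A$, $B$ and $R$; no equicontinuity or boundedness estimate enters, precisely because it is the ordinary spectrum, and not the Waelbroeck spectrum, that is in question here.
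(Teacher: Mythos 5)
Your proposal is correct and coincides with the paper's own argument: the paper also fixes $\lambda\in\varrho(AB)\setminus\{0\}$ and exhibits exactly the same candidate inverse $T:=\lambda^{-1}I+\lambda^{-1}B(\lambda I-AB)^{-1}A\in\mathcal{L}(X)$, leaving the algebraic verification as a direct computation. Your write-up merely makes explicit the computation and the (immediate) continuity of the candidate, so the two proofs are essentially identical.
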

\label{esp}
\begin{proof}
For $\lambda\in \varrho(AB)\setminus \{0\}$,  set $T:=\lambda^{-1}I+\lambda^{-1}B(\lambda I-AB)^{-1}A\in\mathcal{L}(X)$. A direct computation shows $T=(\lambda I-BA)^{-1}$.

\end{proof}

\begin{proposition}
\label{esp2}
Let $X$ be a locally convex space and let $A,B\in \mathcal{L}(X)$, $B$ being an isomorphism. Then $\sigma_p(AB)=\sigma_p(BA)$, $\sigma(AB)=\sigma(BA)$ and $\sigma^*(AB)=\sigma*(BA)$.
\end{proposition}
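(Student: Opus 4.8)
The plan is to exploit the fact that, because $B$ is an isomorphism, $AB$ and $BA$ are not merely related in the weak sense of Proposition~\ref{com} (where one only gets agreement up to the point $0$) but are genuinely similar: a one-line computation gives
\begin{equation*}
BA = B(AB)B^{-1},
\end{equation*}
so conjugation by the topological isomorphism $B$ carries $AB$ to $BA$. Thus the whole statement reduces to the general principle that two operators conjugate by a topological isomorphism share their point spectrum, their spectrum and their Waelbroeck spectrum. I would prove this principle and apply it with the isomorphism $B$.

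For the point spectrum, if $(AB)x=\lambda x$ with $x\neq 0$, then applying $B$ and using $B(AB)=(BA)B$ yields $(BA)(Bx)=\lambda(Bx)$; since $B$ is injective, $Bx\neq 0$, so $\lambda\in\sigma_p(BA)$. The reverse inclusion is symmetric (conjugate by $B^{-1}$), giving $\sigma_p(AB)=\sigma_p(BA)$. For the resolvent and spectrum, the identity
\begin{equation*}
\lambda I - BA = B(\lambda I - AB)B^{-1}
\end{equation*}
shows that $\lambda I-BA$ is a composition of continuous linear maps which is invertible with continuous linear inverse exactly when $\lambda I-AB$ is; in that case
\begin{equation*}
R(\lambda,BA)=B\,R(\lambda,AB)\,B^{-1}.
\end{equation*}
Hence $\varrho(AB)=\varrho(BA)$ and $\sigma(AB)=\sigma(BA)$.

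It remains to treat the Waelbroeck spectrum, which is the only point requiring care, since it involves an equicontinuity condition rather than a mere invertibility statement. I would show that conjugation by the fixed pair $B,B^{-1}$ preserves equicontinuity. Given a $0$-neighbourhood $U$ in $X$, continuity of $B$ provides a $0$-neighbourhood $U'$ with $B(U')\subseteq U$; if a family $\{R(\mu,AB):\mu\in V\}$ is equicontinuous there is a $0$-neighbourhood $W'$ with $R(\mu,AB)(W')\subseteq U'$ for all $\mu\in V$; finally continuity of $B^{-1}$ gives a $0$-neighbourhood $W$ with $B^{-1}(W)\subseteq W'$, and then $B\,R(\mu,AB)\,B^{-1}(W)\subseteq U$ for every $\mu\in V$. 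Thus $\{R(\mu,BA):\mu\in V\}$ is equicontinuous whenever $\{R(\mu,AB):\mu\in V\}$ is. Since $\varrho(AB)=\varrho(BA)$, a neighbourhood $V_\lambda$ witnessing $\lambda\in\varrho^*(AB)$ also witnesses $\lambda\in\varrho^*(BA)$, and symmetrically; hence $\varrho^*(AB)=\varrho^*(BA)$, that is, $\sigma^*(AB)=\sigma^*(BA)$. The only genuine obstacle is this equicontinuity transfer, and it dissolves once one observes that pre- and post-composition with a single fixed continuous operator and its continuous inverse preserves equicontinuity of a family of operators.
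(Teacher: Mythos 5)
Your proof is correct and follows essentially the same route as the paper: both rest on the conjugation identity $\lambda I - BA = B(\lambda I - AB)B^{-1}$, giving $R(\lambda,BA)=B\,R(\lambda,AB)\,B^{-1}$ for the spectrum and the transfer of equicontinuous families for the Waelbroeck spectrum. If anything, you spell out the equicontinuity-transfer step (pre- and post-composition with the fixed continuous maps $B$, $B^{-1}$) that the paper asserts without detail, which is a welcome clarification rather than a divergence.
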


\begin{proof}
If $\lambda\in\sigma_p(AB)$ and $x\in X\setminus\{0\}$ satisfies $AB x=\lambda x$, then $BA(Bx)=\lambda Bx$ and $Bx\neq 0$, and hence $\lambda\in \sigma_p(BA)$. Conversely, if $\lambda\in\sigma_p(BA)$ and $x\in X\setminus\{0\}$ satisfies $BA x=\lambda x$, then there is $y\in X$ such that $By=x$. Then $BABy=\lambda By$, and the injectivity of $B$ yields $ABy=\lambda y$, and consequently $\lambda\in \sigma_p(AB)$.

Let assume now $\lambda\in \varrho(AB)$. We set $T_{\lambda}:=B(\lambda I-AB)^{-1}B^{-1}$. It can be checked $T_\lambda (\lambda I-BA)=(\lambda I-BA)T_\lambda=I$. Hence $\sigma(BA)\subseteq\sigma(AB)$.  Conversely, if $\lambda\in \varrho(BA)$, one can check that $Q_\lambda:=B^{-1}(\lambda I-BA)^{-1}B$ is the inverse of $(\lambda I-AB)$. Thus we have $\sigma(AB)=\sigma(BA)$. Moreover,  for any  compact set $K\subseteq\varrho(AB)$ we have
$$\{(\lambda I-BA)^{-1}: \ \lambda\in K\}=\{B(\lambda I-AB)^{-1}B^{-1}: \ \lambda\in K\}.$$

We conclude $\{(\lambda I-BA)^{-1}: \ \lambda\in K\}$ is equicontinuous if and only if  $\{(\lambda I-AB)^{-1}: \ \lambda\in K\}$ is so, i.e. if and only if $\sigma^*(AB)=\sigma^*(BA)$.
\end{proof}

\subsection{Representation of $C_0^\infty([0,1])$}
The space $C_0^\infty([0,1])$ is well known to be isomorphic to the space $s$ of rapidly decreasing sequences. Bargetz has obtained in $\cite{bargetz2}$  an explicit isomorphism, which it is used in \cite{bargetz} to obtain explicit representations as sequence spaces of important spaces of smooth functions appearing in functional analysis. We study in this note a wide class of isomorphisms defined on this space containing the differentiation operator, the Volterra operator and also the Ces\`{a}ro operator.
 To do this, we need a representation of $C_0^\infty([0,1])$, as the one sided Schwartz space of rapidly decreasing smooth functions $S(\R^+)$. There is a natural representation for the one unit translate of this space 

$$S([1,\infty)):=\{f\in C^\infty([1,\infty)): \ \lim_{x\to\infty} x^nf^{(j)}(x)=0 \text{ for all }j,n\in\N_0\}.$$ 

\noindent To get such representation, we need the well known Fa\`{a} di Bruno formula, which we state below. Let $x\in\R$: if $g$ is $C^j$, i.e $f$ admits continuous derivatives up to order $j$, at $x$ and $f$ is $C^j$ at $f(x)$ then

$$(f\circ g)^{(j)}(x)=\sum_{i=1}^{j} f^{(i)}(g(x))B_{j,i}(g'(x),g''(x),\ldots,g^{(j-i+1)}(x)),$$

\noindent  where $B_{j,i}$ are the Bell polynomials

\begin{equation}
\label{bell1}
B_{j,i}(x_1,x_2,\ldots,x_{j-i+1})=\sum \frac{j!}{i_1!i_2!\cdots i_{j-i+1}!}\left(\frac{x_1}{1!}\right)^{i_1}\cdots \left(\frac{x_{j-i+1}}{(j-i+1)!}\right)^{i_{j-i+1}},
\end{equation}

\noindent $i_1+\cdots i_{j-i+1}=i$, $i_1+2i_2+\cdots+(j-i+1)i_{j-i+1}=j$.

\begin{remark}
\label{bell}
\begin{itemize}
\item[(a)]
From \eqref{bell1}, it follows that $|B_{j,i}(x_1,x_2,\ldots,x_{j-i+1})|\leq B_{j,i}(y_1,y_2,\ldots,y_{j-i+1})$ whenever  $|x_l|\leq |y_l|$, $1\leq l\leq j-i+1$.

\item[(b)]Let $i\leq j$ and let $f_l(x)$ functions defined on  $ (0,1]$  such that,   there exists  $t(i)\in \R$ such that $|f_l(x)|\leq x^{t(i)}$ for each $x\in (0,1]$ and for each $1\leq l\leq j-i+1$,. Then   there exists $M>0$, $t\in\R$ such that  $|B_{j,i}(f_1(x),f_2(x),\ldots,f_{j-i+1}(x))|\leq Mx^t$
 \end{itemize}
\end{remark}

\begin{proposition}
\label{description}
$C_0^\infty([0,1])=\{f\in C^{\infty}([0,1]):\  f^{(j)}(x)=o(x^n)\text{ as }x\text{ approaches to } 0$  $\text{ for all }j,n\in \N_0\}$.
\end{proposition}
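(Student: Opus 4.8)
The plan is to establish the two inclusions separately; write $A$ for the set on the right-hand side of the asserted identity.

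For $A\subseteq C_0^\infty([0,1])$, I would argue from continuity alone. If $f\in A$, then for each fixed $j\in\N_0$ the case $n=0$ of the defining condition reads $f^{(j)}(x)=o(1)$, that is, $\lim_{x\to 0^+}f^{(j)}(x)=0$. Since $f\in C^\infty([0,1])$, the derivative $f^{(j)}$ is continuous at $0$, so $f^{(j)}(0)=\lim_{x\to 0^+}f^{(j)}(x)=0$. As $j$ is arbitrary, $f\in C_0^\infty([0,1])$.

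For the reverse inclusion $C_0^\infty([0,1])\subseteq A$, fix $f\in C_0^\infty([0,1])$ together with $j,n\in\N_0$; the goal is $\lim_{x\to 0^+}f^{(j)}(x)/x^n=0$. I would apply Taylor's theorem to $g:=f^{(j)}$, noting that $g\in C^\infty([0,1])$ and $g^{(k)}(0)=f^{(j+k)}(0)=0$ for every $k\in\N_0$, so the Taylor coefficients of $g$ at $0$ up to order $n$ all vanish. The integral form of the remainder then gives
$$g(x)=\frac{1}{n!}\int_0^x (x-t)^n g^{(n+1)}(t)\,dt,$$
and bounding $|g^{(n+1)}|$ by its maximum $M$ over the compact interval $[0,1]$ yields $|g(x)|\le \frac{M}{(n+1)!}\,x^{n+1}$. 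Dividing by $x^n$ gives $|g(x)|/x^n\le \frac{M}{(n+1)!}\,x\to 0$ as $x\to 0^+$, which is precisely $f^{(j)}(x)=o(x^n)$. Since $j$ and $n$ are arbitrary, $f\in A$.

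I do not expect a serious obstacle: the second inclusion is a routine application of Taylor's theorem, and the first is immediate from continuity. The only points demanding mild care are that all the limits here are one-sided (the domain being $[0,1]$ forces the derivatives at $0$ to be right-hand derivatives) and that smoothness on the \emph{closed} interval is what guarantees each $g^{(n+1)}$ is bounded on $[0,1]$, making the remainder estimate go through; the Lagrange form of the remainder would serve equally well in place of the integral form.
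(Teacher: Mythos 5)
Your proof is correct and takes essentially the same approach as the paper: the paper derives the bound $|f^{(j)}(x)|\le \sup_{t\in[0,1]}|f^{(j+n+1)}(t)|\,x^{n+1}$ by iterating the mean value theorem, while you get $|f^{(j)}(x)|\le \frac{M}{(n+1)!}\,x^{n+1}$ from Taylor's theorem with remainder --- a cosmetic difference, since both arguments rest on the vanishing of all derivatives at $0$ plus boundedness of a higher derivative on the closed interval. The reverse inclusion is handled identically (the $n=0$ case together with continuity of $f^{(j)}$ at $0$), which the paper simply declares trivial.
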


\begin{proof}
. Let $f\in C_0^\infty([0,1])$. The mean value theorem implies $|f(x)|=|f'(t)|x$ for some $t\in (0,x)$. A reiteration of the argument produces $|f(x)|\leq \sup_{t\in[0,1]}|f^{(n)}(t)|x^n$ for all $n\in\N_0$. The condition $f^{(j)}(x)=o(x^n)$ as $x$ approaches to $0$ follows  from the fact that  $f^{(j)}\in C_0^\infty([0,1])$ for all $j\in\N$. The other inclusion is trivial.
\end{proof}
 
 From Proposition \ref{description} and Leibnitz's formula, it follows immediately the following corollary.

\begin{corollary}
\label{monomials}
 and Given $f\in C_0^\infty([0,1])$ then $g(x):=x^tf\in C_0^\infty([0,1])$ for any $t\in \R$ (by defining $g(0)=0$ when $t<0$).
\end{corollary}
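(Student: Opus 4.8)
The plan is to show that for $f \in C_0^\infty([0,1])$ and any $t \in \R$, the function $g(x) := x^t f(x)$ lies in $C_0^\infty([0,1])$. By Proposition \ref{description}, membership in $C_0^\infty([0,1])$ is equivalent to the condition that $g \in C^\infty([0,1])$ with $g^{(j)}(x) = o(x^n)$ as $x \to 0$ for all $j, n \in \N_0$. So the real work is to verify both that $g$ is smooth up to and including $0$ (with the convention $g(0)=0$ when $t < 0$) and that all its derivatives decay faster than any power of $x$ near $0$.

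First I would compute the derivatives of $g$ away from $0$ using Leibniz's formula, as the hint in the corollary statement suggests. For $x \in (0,1]$ we have
$$
g^{(j)}(x) = \sum_{k=0}^{j} \binom{j}{k} \frac{d^{k}}{dx^{k}}\bigl(x^t\bigr)\, f^{(j-k)}(x) = \sum_{k=0}^{j} \binom{j}{k} t(t-1)\cdots(t-k+1)\, x^{t-k} f^{(j-k)}(x).
$$
The key observation is that each $f^{(j-k)}$ is itself an element of $C_0^\infty([0,1])$, so by Proposition \ref{description} it satisfies $f^{(j-k)}(x) = o(x^m)$ for every $m$. In particular, choosing $m$ large enough to absorb both the factor $x^{t-k}$ (where $t-k$ may be very negative) and to produce any desired power $x^n$, each summand is $o(x^n)$ for every $n$. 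Hence $g^{(j)}(x) = o(x^n)$ as $x \to 0$ for all $j, n \in \N_0$, which is exactly the decay condition required.

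The one genuine obstacle, rather than a routine estimate, is establishing that $g$ is actually $C^\infty$ on the closed interval including the point $0$ — the Leibniz computation above is only valid on $(0,1]$. I would handle this by an inductive argument: define $g(0) = 0$ (valid since even for $t<0$ the flatness of $f$ forces $x^t f(x) \to 0$), and then show that the derivatives $g^{(j)}$ extend continuously to $0$ with value $0$. Concretely, one shows that the difference quotient $\bigl(g^{(j)}(x) - g^{(j)}(0)\bigr)/x = g^{(j)}(x)/x$ tends to a limit as $x \to 0^+$; since $g^{(j)}(x) = o(x^n)$ for all $n$, in particular $g^{(j)}(x)/x \to 0$, so $g^{(j+1)}(0)$ exists and equals $0$, and the continuity of $g^{(j+1)}$ at $0$ follows from the same $o(x^n)$ estimate applied to $g^{(j+1)}$. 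This bootstraps smoothness across $0$ with all derivatives vanishing there, completing the proof that $g \in C_0^\infty([0,1])$.
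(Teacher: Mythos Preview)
Your proposal is correct and follows essentially the same approach as the paper, which simply states that the corollary follows from Proposition~\ref{description} and Leibniz's formula. You have supplied the details the paper omits, in particular the inductive verification that $g$ is genuinely $C^\infty$ at the endpoint $0$; the paper leaves this implicit in its appeal to the characterization of $C_0^\infty([0,1])$ via the $o(x^n)$ decay of all derivatives.
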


\begin{theorem}
\label{representation}
The map $T:\ C_0^\infty([0,1])\to S([1,\infty))$,  $f \mapsto T(f)$, defined as  $T(f)(x)=\tilde{f}(x):=f(1/x)$, $x\in(0,1]$, is an isomorphism. 
\end{theorem}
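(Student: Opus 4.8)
The plan is to show that $T$ is a well-defined continuous linear bijection between the Fr\'echet spaces $C_0^\infty([0,1])$ and $S([1,\infty))$, with inverse given by $g\mapsto\check g$, where $\check g(y):=g(1/y)$ for $y\in(0,1]$ and $\check g(0):=0$; continuity of the inverse will then follow from the open mapping theorem. Linearity is immediate, and every estimate will rest on the Fa\`a di Bruno formula applied to $\tilde f=f\circ\phi$ and to $\check g=g\circ\phi$ with $\phi(x)=1/x$, for which $\phi^{(\ell)}(x)=(-1)^\ell\ell!\,x^{-\ell-1}$.

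For the well-definedness and continuity of $T$, fix $j,n\in\N_0$. For $x\ge1$ the Fa\`a di Bruno formula gives
$$\tilde f^{(j)}(x)=\sum_{i=1}^{j} f^{(i)}(1/x)\,B_{j,i}\bigl(\phi'(x),\dots,\phi^{(j-i+1)}(x)\bigr).$$
Since $|\phi^{(\ell)}(x)|=\ell!\,x^{-\ell-1}\le\ell!$ on $[1,\infty)$, Remark \ref{bell}(a) bounds each Bell polynomial by a constant $c_{j,i}$ depending only on $j,i$. Applying the estimate from the proof of Proposition \ref{description} to $f^{(i)}\in C_0^\infty([0,1])$ gives $|f^{(i)}(y)|\le\|f\|_{i+m}\,y^{m}$ for every $m\in\N_0$, so with $y=1/x$ and $m=n$ one gets $x^n|f^{(i)}(1/x)|\le\|f\|_{i+n}$ and hence $\sup_{x\ge1}|x^n\tilde f^{(j)}(x)|\le\bigl(\sum_{i=1}^j c_{j,i}\bigr)\|f\|_{j+n}$. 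Taking instead $m=n+1$ shows $x^n\tilde f^{(j)}(x)\to0$ as $x\to\infty$, so $\tilde f\in S([1,\infty))$; the displayed bound is precisely the continuity of $T$.

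For the inverse, the same formula gives, for $y\in(0,1]$, $\check g^{(j)}(y)=\sum_{i=1}^j g^{(i)}(1/y)\,B_{j,i}(\phi'(y),\dots,\phi^{(j-i+1)}(y))$, and since each argument $\phi^{(\ell)}(y)=(-1)^\ell\ell!\,y^{-\ell-1}$ is a single monomial, a homogeneity count (consistent with Remark \ref{bell}(b), with exponent $t=-j-i$) yields $|B_{j,i}(\phi'(y),\dots)|\le c_{j,i}\,y^{-j-i}$. Writing $p_{n,i}(g):=\sup_{z\ge1}z^n|g^{(i)}(z)|$ for the seminorms of $S([1,\infty))$, the rapid decay of $g$ gives $|g^{(i)}(1/y)|\le p_{n,i}(g)\,y^{n}$, whence $|\check g^{(j)}(y)|\le\sum_{i=1}^j c_{j,i}\,p_{n,i}(g)\,y^{\,n-j-i}$. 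Choosing $n$ large shows that $\check g^{(j)}(y)=o(y^m)$ for every $m$ as $y\to0^+$; in particular every derivative of $\check g$ computed on $(0,1]$ tends to $0$ at the origin.

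The one genuinely analytic point, which I expect to be the main obstacle, is upgrading these boundary limits to honest differentiability at the endpoint $0$, since Fa\`a di Bruno only produces derivatives on $(0,1]$. Here I would use the elementary consequence of the mean value theorem that a function continuous on $[0,1]$, differentiable on $(0,1]$, whose derivative has a limit $L$ at $0$, is differentiable at $0$ with derivative $L$. Applying this inductively---starting from $\check g(y)=g(1/y)\to0$ and using $\check g^{(j)}(y)\to0$ at each stage---shows $\check g\in C^\infty([0,1])$ with $\check g^{(j)}(0)=0$ for all $j$, i.e. $\check g\in C_0^\infty([0,1])$. Since $T(\check g)=g$ for every $g\in S([1,\infty))$ and conversely $\check g=f$ when $g=\tilde f$, the map $T$ is a bijection with inverse $g\mapsto\check g$; both spaces being Fr\'echet, the open mapping theorem gives continuity of $T^{-1}$. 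Alternatively, taking $n=2N$ in the last estimate bounds $\|\check g\|_N$ by finitely many seminorms $p_{2N,i}(g)$, yielding continuity of $T^{-1}$ directly and completing the proof that $T$ is an isomorphism.
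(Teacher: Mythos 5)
Your proof is correct, and it runs on the same engine as the paper's: the Fa\`a di Bruno formula applied to $f\circ\phi$ with $\phi(x)=1/x$, combined with the Bell polynomial bounds of Remark \ref{bell}, used in both directions to transfer rapid decay between the two spaces. The differences are in how the soft functional-analytic steps are handled, and they are worth noting. First, the paper obtains continuity of $T$ for free from the closed graph theorem (pointwise--pointwise continuity suffices between Fr\'echet spaces) and never estimates a seminorm; you instead derive the explicit bound $\sup_{x\ge1}|x^n\tilde f^{(j)}(x)|\le C_{j,n}\|f\|_{j+n}$, and likewise a direct bound $\|\check g\|_N\le C_N\max_{i\le N}p_{2N,i}(g)$ for the inverse, which makes the open mapping theorem optional and yields quantitative control the paper does not provide. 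Second, and more substantively, you isolate and prove the endpoint-regularity step: knowing only that $\check g^{(j)}(y)\to 0$ as $y\to 0^+$ for every $j$, with derivatives computed on $(0,1]$, you upgrade this to genuine membership in $C^\infty([0,1])$ with all derivatives vanishing at $0$, via the inductive mean value theorem lemma. The paper's surjectivity argument passes directly from the decay estimate to \lqq$\tilde g\in C_0^\infty([0,1])$ by Proposition \ref{description}\rqq, but that proposition characterizes $C_0^\infty([0,1])$ \emph{inside} $C^\infty([0,1])$, so smoothness at the endpoint is being presupposed rather than proved; your MVT induction is exactly the justification that closes this gap. In short: same skeleton, but your version is self-contained and explicit where the paper leans on soft theorems and leaves the endpoint smoothness implicit.
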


\begin{proof}
For  $n\in\N$, by the Fa\`{a} di Bruno F\'ormula we have

\begin{equation}
\label{welldefined}
x^n\tilde{f}^{(j)}(x)=  x^{n}\sum_{i=1}^{j} f^{(i)}(1/x)B_{j,i}(-x^{-2},2x^{-3},\ldots, (-1)^{j-i+1}(j-i+1)!x^{-(j-i+2)}).
\end{equation}

By Remark \ref{bell}, we get $M>0$ ,$a\in\R$ such that, for all $1\leq i\leq j$ 

\begin{equation}
\label{welldefined2}
|x^{n}B_{j,i}(-x^{-2},\ldots, (-1)^{j-i+1}(j-i+1)!x^{-(j-i+2)})|\leq M x^{a}
\end{equation}

Let $t:=1/x$. From \eqref{welldefined} , \eqref{welldefined2} and Proposition \ref{description} we get

\begin{equation}
\label{welldefined3}
\lim_{x\to\infty} x^n|\tilde{f}^{(j)}(x)|\leq M \lim_{t\to 0^+}   \sum_{i=1}^{j}f^{(i)}(t)t^{-a}=0
\end{equation}

Then $T$ is well defined. The mapping $T $ is injective and continuous by the closed graph theorem, since it is obviously pointwise--pointwise continuous. We see that $T$ is also surjective. Given $f\in S([1,\infty))$, we make an abuse of notation to define $\tilde{g}(x)=g(1/x)$, $x\in (0,1]$, $\tilde{g}(0)=0$. For all $j,n\in \N$, a completely symmetric argument to that used for getting \eqref{welldefined3}, gives 

$$\lim_{x\to 0^+} x^{-n} \tilde{g}^{(j)}(x)=0.$$

\noindent  From Proposition \ref{description}, it follows $\tilde{g}\in C_0^\infty([0,1])$. We conclude from $T(\tilde{g})=\tilde{\tilde{g}}=g$.

\end{proof}

\section{Spectrum of multipliers on $C_0^\infty([0,1])$}

In view of Theorem \ref{representation}, the results given in this section are closely related to \cite[Proposition 3.3, Remark 3.5]{am}.
\begin{definition}
The space of multipliers of $C_0^\infty([0,1])$ is defined as 
$$\mathcal{M}:=\{\omega:(0,1]\to \C: \forall f\in C_0^\infty([0,1])\ \omega f\text{ can be extended to }0\text{ as a function in } C_0^\infty([0,1]) )\}.$$ 

\noindent  For $\omega\in\mathcal{M}$, we denote by $M_\omega:C_0^\infty([0,1])\to C_0^\infty([0,1]),\ f\mapsto \omega f$ the corresponding multiplication operator. 
\end{definition}
\begin{lemma}
\label{multipliers1}
A function $\omega:(0,1]\to\C$ satisfies $\omega \in \mathcal{M}$  if and only if $\omega\in C^\infty((0,1])$ and,  for each $j\in\N_0$, there is $n\in\N$ such that $\omega^{(j)}(x)=o(x^{-n})$ as $x$ approaches to 0. 
\end{lemma}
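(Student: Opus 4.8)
The plan is to prove the two implications separately, using Proposition \ref{description} to translate membership in $C_0^\infty([0,1])$ into the vanishing conditions $h^{(j)}(x)=o(x^n)$, and the closed graph theorem to convert the pointwise hypothesis $\omega\in\mathcal M$ into uniform seminorm estimates.

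For the sufficiency ($\Leftarrow$), suppose $\omega\in C^\infty((0,1])$ satisfies the stated growth bounds and fix $f\in C_0^\infty([0,1])$. Then $\omega f\in C^\infty((0,1])$, and Leibniz's rule gives $(\omega f)^{(j)}=\sum_{k=0}^j\binom{j}{k}\omega^{(k)}f^{(j-k)}$. For each $k$ choose $n_k$ with $\omega^{(k)}(x)=o(x^{-n_k})$; since $f\in C_0^\infty([0,1])$, Proposition \ref{description} gives $f^{(j-k)}(x)=o(x^{N+n_k})$ for every $N$, so each summand $\omega^{(k)}f^{(j-k)}$ is $o(x^N)$. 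Being a finite sum, $(\omega f)^{(j)}(x)=o(x^N)$ for every $j$ and every $N$; in particular all derivatives of $\omega f$ tend to $0$ at $0$. By the standard fact that a function in $C^\infty((0,1])$ all of whose derivatives have finite limits at $0$ extends to an element of $C^\infty([0,1])$, the product $\omega f$ extends smoothly, and Proposition \ref{description} then places this extension in $C_0^\infty([0,1])$. Hence $\omega\in\mathcal M$.

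For the necessity ($\Rightarrow$), smoothness is immediate: pick $f_0\in C_0^\infty([0,1])$ with $f_0(x)>0$ on $(0,1]$, for instance $f_0(x)=e^{-1/x}$; then $\omega f_0\in C_0^\infty([0,1])\subseteq C^\infty((0,1])$ and $\omega=(\omega f_0)/f_0\in C^\infty((0,1])$. For the growth bound I would first observe that $M_\omega\colon C_0^\infty([0,1])\to C_0^\infty([0,1])$ has closed graph, since convergence in either copy forces uniform, hence pointwise, convergence, which identifies any graph limit as $\omega f$; as $C_0^\infty([0,1])$ is Fr\'echet, $M_\omega$ is continuous, so for each $j$ there are $m_j\in\N$ and $C_j>0$ with $\|\omega f\|_j\le C_j\|f\|_{m_j}$ for all $f$. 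The key step is then to localize: fix $\psi\in C^\infty(\R)$ with $\psi\equiv 1$ on $[-\tfrac14,\tfrac14]$ and $\supp\psi\subseteq[-\tfrac12,\tfrac12]$, and for $x_0\in(0,\tfrac23]$ set $f_{x_0}(x)=\psi((x-x_0)/x_0)$, which lies in $C_0^\infty([0,1])$ because its support avoids $0$, and which satisfies $\|f_{x_0}\|_m\le c_m\,x_0^{-m}$.

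Since $f_{x_0}\equiv 1$ on a neighbourhood of $x_0$, we have $\omega f_{x_0}=\omega$ there, so $\omega^{(j)}(x_0)=(\omega f_{x_0})^{(j)}(x_0)$, and the continuity estimate yields $|\omega^{(j)}(x_0)|\le\|\omega f_{x_0}\|_j\le C_j\|f_{x_0}\|_{m_j}\le C_j c_{m_j}\,x_0^{-m_j}$. As $x_0\in(0,\tfrac23]$ was arbitrary, $\omega^{(j)}(x)=O(x^{-m_j})=o(x^{-(m_j+1)})$, the required bound with $n=m_j+1$. I expect the main obstacle to be precisely this necessity direction: the pointwise definition of $\mathcal M$ supplies no a priori uniformity, and the role of the closed graph theorem together with the scale-$x_0$ bump functions is exactly to manufacture the uniform constants that force polynomial growth, whereas the sufficiency direction is a routine Leibniz estimate.
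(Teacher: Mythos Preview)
Your argument is correct. Both implications go through as you describe: the Leibniz computation combined with Proposition~\ref{description} handles sufficiency, and for necessity the closed-graph continuity of $M_\omega$ together with the scaled bumps $f_{x_0}(x)=\psi((x-x_0)/x_0)$ extracts the polynomial growth bound on $\omega^{(j)}$ near~$0$.

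The paper's own proof takes a genuinely different route: it transports the question through the isomorphism $T:C_0^\infty([0,1])\to S([1,\infty))$ of Theorem~\ref{representation}, observing that $\omega\in\mathcal M$ if and only if $\tilde\omega(x)=\omega(1/x)$ is a multiplier of $S([1,\infty))$, and then invokes the classical characterisation of multipliers on the Schwartz space (each derivative has at most polynomial growth) together with the Fa\`a di Bruno formula to translate the polynomial-growth condition on $\tilde\omega^{(j)}$ at infinity back into the $o(x^{-n})$ condition on $\omega^{(j)}$ at~$0$. Your approach is more self-contained---it does not need the representation theorem or any external multiplier result---and in effect carries out in situ the standard bump-function proof of the Schwartz multiplier theorem. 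The paper's approach is shorter on the page but exports the substantive estimate to a cited reference; yours makes the mechanism (closed graph plus localisation) fully visible and would transplant directly to other weighted smooth-function spaces.
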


\begin{proof}
By the definition, it is immediate to show that $\omega\in C^{\infty}(0,1]$ whenever $\omega\in \mathcal{M}$. Theorem \ref{representation} yields that $\omega\in \mathcal{M}$ if and only if $\tilde{\omega}(x):=\omega(1/x)$ is a multiplier in $S([1,\infty))$.  By the standard proof characterizing the multipliers of $S(\R)$ (see \cite{grote}),  this is equivalent to $\tilde{\omega}\in C^\infty([1,\infty))$ and for all $j\in\N$ there is $n\in\N$ such that $\tilde{\omega}^{(j)}(x)=o(x^n)$ as $x$ goes to $\infty$. 
Let $k\in\N$ such that $\tilde{\omega}'(x)=o(x^k)$ as $x$ goes to $\infty$. This is equivalent to $\omega'(x)=o(x^{-k+2})$ as $x$ approaches $0$. Using Fa\`{a} di Bruno formula one gets inductively the statement.
\end{proof}

\begin{proposition}
\label{multipliers}

Let $\omega\in\mathcal{M}$. The multiplication operator $M_\omega:C_0^\infty([0,1])\to C_0^\infty([0,1])$ is an isomorphism if and only if $\omega(x)\neq 0$ for all $x\in (0,1]$   and $1/\omega\in\mathcal{M}$.

\end{proposition}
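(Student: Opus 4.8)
The plan is to recognize that the only possible inverse of $M_\omega$ is the multiplication operator $M_{1/\omega}$, and then to prove the two implications separately: the sufficiency reduces to a one–line algebraic verification, while the necessity is carried entirely by the surjectivity of $M_\omega$.

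For the sufficiency, suppose $\omega(x)\neq 0$ on $(0,1]$ and $1/\omega\in\mathcal{M}$. Then $M_{1/\omega}$ is a well-defined linear self-map of $C_0^\infty([0,1])$, and, exactly as for $M_\omega$, it is continuous by the closed graph theorem: if $f_k\to f$ and $M_{1/\omega}f_k\to g$ in $C_0^\infty([0,1])$, both limits also hold pointwise on $(0,1]$, forcing $g=(1/\omega)f=M_{1/\omega}f$. Since $\omega\cdot(1/\omega)\equiv 1$ on $(0,1]$, one gets $M_\omega M_{1/\omega}=M_{1/\omega}M_\omega=I$, so $M_\omega$ is an isomorphism with inverse $M_{1/\omega}$.

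For the necessity, assume $M_\omega$ is an isomorphism, hence surjective. I first claim $\omega$ has no zeros on $(0,1]$: if $\omega(x_0)=0$ for some $x_0\in(0,1]$, then every element $\omega f$ of the range vanishes at $x_0$, whereas the flat test function $h(x):=e^{-1/x}$ (with $h(0)=0$) belongs to $C_0^\infty([0,1])$ by Proposition \ref{description} and has $h(x_0)\neq 0$, so $h$ escapes the range, contradicting surjectivity. Thus $\omega(x)\neq 0$ for all $x\in(0,1]$ and $1/\omega\in C^\infty((0,1])$. Now $1/\omega\in\mathcal{M}$ is just a restatement of surjectivity: given any $g\in C_0^\infty([0,1])$, pick $f\in C_0^\infty([0,1])$ with $\omega f=g$ on $(0,1]$; dividing by the nonvanishing $\omega$ gives $(1/\omega)g=f$ on $(0,1]$, so $(1/\omega)g$ extends to the element $f$ of $C_0^\infty([0,1])$, which is precisely the defining property of $\mathcal{M}$.

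The step I expect to carry the real weight is this last one, and its point is conceptual rather than computational: instead of attempting to verify the derivative growth bounds of Lemma \ref{multipliers1} for $1/\omega$ directly (which would be delicate, since passing to the reciprocal interacts badly with the $o(x^{-n})$ estimates), the argument extracts the multiplier property of $1/\omega$ for free from the surjectivity of $M_\omega$. The only remaining points needing care are the continuity of the two multiplication operators via the closed graph theorem and the use of an explicit smooth function flat at $0$ to rule out zeros of $\omega$.
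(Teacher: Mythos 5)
Your proof is correct and follows essentially the same route as the paper: the zeros of $\omega$ are ruled out with the same test function $e^{-1/x}$, the multiplier property of $1/\omega$ is extracted from surjectivity (the paper phrases this via the inverse operator $T$, which satisfies $T(g)=(1/\omega)g$ — the same preimage you pick by hand), and the converse is the same one-line algebraic verification that the paper dismisses as trivial. Your write-up is merely more explicit, in particular about the closed-graph-theorem continuity of $M_{1/\omega}$, which the paper leaves implicit.
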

\begin{proof}
First we observe that if $\omega(x_0)=0$ for some $x_0\in (0,1]$ , then $M_\omega f(x_0)=0$ for all $f\in C_0^\infty([0,1])$. Hence $M_\omega$ is not surjective (observe, for instance $f(x)=e^{-1/x}\in C_0^\infty([0,1])$).  If $M_\omega$ is an isomorphism, then the inverse $T$ satisfies $M_\omega T(f)=\omega Tf=f$, hence $T(f)(x)=(1/\omega(x))f(x)$ for all $x\in (0,1]$. This means $1/\omega\in\mathcal{M}$ and $T=M_{1/\omega}$. The converse is trivial.
\end{proof}

\begin{corollary}
\label{multipliers2}
Let $\omega\in\mathcal{M}$, the multiplication operator $M_\omega:C_0^\infty([0,1])\to C_0^\infty([0,1])$ is an isomorphism if and only if $\omega(x)\neq 0$ for all $x\in (0,1]$   and there is $m\in\N$ such that $(1/\omega(x))=o(x^{-m})$ as $x$ approaches 0. In particular, for every $p\in\R$, if we define $\omega_p(x):=x^p$,  then $M_{\omega_p}$ is an isomorphism.
\end{corollary}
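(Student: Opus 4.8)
The plan is to deduce everything from Proposition \ref{multipliers} together with the characterisation of $\mathcal{M}$ in Lemma \ref{multipliers1}. By Proposition \ref{multipliers}, $M_\omega$ is an isomorphism precisely when $\omega(x)\neq 0$ on $(0,1]$ and $1/\omega\in\mathcal{M}$; so the whole task reduces to showing that, under the standing hypothesis $\omega\in\mathcal{M}$ and $\omega(x)\neq 0$ on $(0,1]$, membership $1/\omega\in\mathcal{M}$ is equivalent to the single growth condition $1/\omega(x)=o(x^{-m})$ for some $m\in\N$.

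One direction is immediate: if $1/\omega\in\mathcal{M}$, then Lemma \ref{multipliers1} applied with $j=0$ produces an $m\in\N$ with $(1/\omega)(x)=o(x^{-m})$, which is the stated condition. For the converse I would argue as follows. Since $\omega\in\mathcal{M}\subseteq C^\infty((0,1])$ and $\omega$ never vanishes on $(0,1]$, the reciprocal $1/\omega$ lies automatically in $C^\infty((0,1])$, so by Lemma \ref{multipliers1} it suffices to produce, for each $j\in\N$, some $n\in\N$ with $(1/\omega)^{(j)}(x)=o(x^{-n})$ as $x\to 0^+$. Writing $1/\omega=h\circ\omega$ with $h(y)=1/y$ and $h^{(i)}(y)=(-1)^i i!\,y^{-(i+1)}$, the Fa\`a di Bruno formula gives
$$(1/\omega)^{(j)}(x)=\sum_{i=1}^{j}\frac{(-1)^i i!}{\omega(x)^{\,i+1}}\,B_{j,i}\!\left(\omega'(x),\dots,\omega^{(j-i+1)}(x)\right).$$

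The core of the argument is then to bound each summand by a fixed power of $x$. The hypothesis $1/\omega(x)=o(x^{-m})$ yields $|\omega(x)^{-(i+1)}|\le x^{-m(i+1)}$ near $0$; meanwhile $\omega\in\mathcal{M}$ provides, via Lemma \ref{multipliers1}, exponents making each $|\omega^{(l)}(x)|$ dominated by a power of $x$ near $0$, so Remark \ref{bell}(b) supplies $M>0$ and $t\in\R$ with $|B_{j,i}(\omega'(x),\dots,\omega^{(j-i+1)}(x))|\le M x^{t}$. Multiplying these estimates shows that each term, and hence $(1/\omega)^{(j)}(x)$, is $O(x^{s})$ for some real $s$, which is certainly $o(x^{-n})$ once $n>-s$; taking finitely many $i=1,\dots,j$ into account, a single such $n$ works. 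Thus $1/\omega\in\mathcal{M}$, and Proposition \ref{multipliers} closes the equivalence. The only delicate point is the bookkeeping of exponents inside the Bell polynomials, but Remark \ref{bell}(b) is tailored exactly for this, so I expect no genuine obstacle.

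Finally, for the particular case take $\omega_p(x)=x^p$. Its derivatives $\omega_p^{(j)}(x)=p(p-1)\cdots(p-j+1)x^{p-j}$ are $o(x^{-n})$ for any $n>j-p$, so $\omega_p\in\mathcal{M}$; moreover $\omega_p$ is nonzero on $(0,1]$, and $1/\omega_p(x)=x^{-p}=o(x^{-m})$ for any integer $m>p$. The equivalence just established then shows $M_{\omega_p}$ is an isomorphism.
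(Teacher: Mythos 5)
Your proof is correct and takes essentially the same route as the paper's: reduce via Proposition \ref{multipliers} to showing $1/\omega\in\mathcal{M}$, then apply the Fa\`a di Bruno formula to $1/\omega = h\circ\omega$ with $h(y)=1/y$ and control the resulting terms with Lemma \ref{multipliers1} and Remark \ref{bell}. The only difference is that you spell out the exponent bookkeeping, the necessity direction, and the verification for $\omega_p(x)=x^p$, all of which the paper leaves implicit.
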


\begin{proof}
By Lemma \ref{multipliers1} and Proposition \ref{multipliers}, we only need to show the suficiency of the condition. 
 Assume that $\omega(x)\neq 0$ and there is $m\in\N$ such that $((1/\omega(x))=o(x^{-m})$ as $x$ approaches 0. We need to show that $1/\omega\in\mathcal{M}$. Let $j\in\N$. By applying Fa\`{a} di Bruno formula we get 

$$((\omega(x))^{-1})^{(j)}=  \sum_{i=1}^{j} (-1)^i i!(\omega(x))^{-i-1}B_{j,i}(\omega'(x),\cdots ,\omega^{(j-i+1)}(x)).$$

\noindent Therefore we apply the hypothesis to get $k\in\N$ such that  $(\omega(x)^{-1})^{(j)}=o(x^{-k})$ as $x$ approaches 0. The conclusion follows from Lemma \ref{multipliers1} and Remark \ref{bell}.    
\end{proof}

\begin{corollary}
\label{c14}
 If $\omega\in \mathcal{M}$  then the spectrum of $M_\omega$ is

$$\sigma(M_\omega)=\omega((0,1])\cup \{\lambda\notin \omega((0,1]):\  x^n/(\lambda-\omega) \text{ unbounded in }(0,1]\ \forall n\in\N \}.$$
\end{corollary}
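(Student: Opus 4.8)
The plan is to compute the spectrum of $M_\omega$ by determining exactly when $\lambda I - M_\omega = M_{\lambda - \omega}$ fails to be an isomorphism, using the multiplication structure throughout. The key observation is that $\lambda I - M_\omega = M_{\lambda - \omega}$, where $\lambda - \omega$ denotes the function $x \mapsto \lambda - \omega(x)$. Since $\omega \in \mathcal{M}$ and the constant function $\lambda$ lies in $\mathcal{M}$ (it is smooth and bounded on $(0,1]$), we have $\lambda - \omega \in \mathcal{M}$, so $M_{\lambda-\omega}$ is a well-defined continuous operator on $C_0^\infty([0,1])$. Thus $\lambda \in \sigma(M_\omega)$ if and only if $M_{\lambda-\omega}$ is not an isomorphism, and I can apply Corollary \ref{multipliers2} directly to the multiplier $\lambda - \omega$.

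First I would handle the case $\lambda \in \omega((0,1])$: then $\lambda - \omega(x_0) = 0$ for some $x_0 \in (0,1]$, so by the first part of the proof of Proposition \ref{multipliers} the operator $M_{\lambda - \omega}$ is not surjective, hence $\lambda \in \sigma(M_\omega)$. This shows $\omega((0,1]) \subseteq \sigma(M_\omega)$. Next, for $\lambda \notin \omega((0,1])$ the function $\lambda - \omega$ is nowhere zero on $(0,1]$, so by Corollary \ref{multipliers2} the operator $M_{\lambda-\omega}$ is an isomorphism if and only if there is $m \in \N$ with $1/(\lambda - \omega(x)) = o(x^{-m})$ as $x \to 0^+$. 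Therefore $\lambda \in \sigma(M_\omega)$ precisely when no such $m$ exists, i.e. when $x^n/(\lambda - \omega(x))$ is unbounded on $(0,1]$ for every $n \in \N$. Combining the two cases yields exactly the claimed description of $\sigma(M_\omega)$.

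The main point requiring care is the precise translation between the ``little-$o$'' growth condition in Corollary \ref{multipliers2} and the ``unbounded for all $n$'' condition in the statement. I would argue that for $\lambda \notin \omega((0,1])$ the following are equivalent: (i) there exists $m \in \N$ with $1/(\lambda-\omega(x)) = o(x^{-m})$; (ii) there exists $n \in \N$ such that $x^n/(\lambda - \omega(x))$ is bounded on $(0,1]$. Indeed, if (i) holds then $x^m/(\lambda-\omega(x)) \to 0$ as $x \to 0^+$, and since $1/(\lambda - \omega)$ is continuous on $(0,1]$ this forces $x^m/(\lambda-\omega(x))$ to be bounded on all of $(0,1]$, giving (ii) with $n = m$. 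Conversely, if $x^n/(\lambda-\omega(x))$ is bounded by $M$ on $(0,1]$, then $1/(\lambda-\omega(x)) \le M x^{-n} = o(x^{-(n+1)})$, giving (i). Hence the negation of (ii) — unboundedness of $x^n/(\lambda-\omega(x))$ for every $n$ — is exactly the negation of (i), which is the condition placing $\lambda$ in $\sigma(M_\omega)$.

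The step I expect to be the mildest obstacle is the continuity argument upgrading a limit at $0$ to boundedness on the full interval $(0,1]$; this is routine since $1/(\lambda-\omega)$ is continuous and hence bounded on any compact subinterval $[\delta,1]$, while the $o$-condition controls behaviour near $0$. No genuinely hard analysis is needed here: the entire result is a bookkeeping consequence of Corollary \ref{multipliers2} applied to the multiplier $\lambda - \omega$, together with this elementary equivalence between the decay and unboundedness formulations.
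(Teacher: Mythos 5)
Your proposal is correct and follows essentially the same route as the paper: the paper's proof is precisely to apply Corollary \ref{multipliers2} to the multiplier $\lambda-\omega$. You have merely filled in the details the paper leaves implicit, namely the non-surjectivity of $M_{\lambda-\omega}$ when $\lambda\in\omega((0,1])$ and the elementary equivalence between the condition $1/(\lambda-\omega)=o(x^{-m})$ and the boundedness of $x^n/(\lambda-\omega)$ for some $n$, both of which check out.
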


\begin{proof}
It follows from Corollary \ref{multipliers2} applied to the multiplier $\lambda-\omega$, for any $\lambda\in\C\setminus \omega((0,1])$. 
\end{proof}

\begin{lemma}
\label{eq}
Let $(\omega_i)_{i\in I}\subseteq \mathcal{M}$. Assume that for each $j\in \N$ there is $M_j>0$ and $t(j)\in \R$  such that $|\omega_i^{(j)}(x)|\leq M_jx^{t(j)}$ for every $x\in (0,1]$, $i\in I$. Then the set of multiplication operators $(M_{\omega_i})_{i\in I}\subseteq \mathcal{L}(C_0^\infty([0,1]))$ is equicontinuous.
\end{lemma}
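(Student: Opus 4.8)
The plan is to verify the definition of equicontinuity directly against the defining seminorms $\|\cdot\|_n$ of $C_0^\infty([0,1])$. Fix $n\in\N_0$; I want to produce an index $m\in\N_0$ and a constant $C>0$, both independent of $i\in I$, such that $\|M_{\omega_i}f\|_n\le C\|f\|_m$ for every $f\in C_0^\infty([0,1])$ and every $i\in I$. Since $M_{\omega_i}f=\omega_i f$, the natural starting point is Leibniz's formula
$$(\omega_i f)^{(j)}(x)=\sum_{k=0}^j\binom{j}{k}\omega_i^{(k)}(x)f^{(j-k)}(x),\qquad 0\le j\le n,\ x\in(0,1].$$

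Next I would estimate each summand separately. The hypothesis provides $|\omega_i^{(k)}(x)|\le M_kx^{t(k)}$ uniformly in $i$, while the flatness of $f$ controls the other factor: applying the mean value argument used in the proof of Proposition \ref{description} to $f^{(j-k)}\in C_0^\infty([0,1])$ gives, for every $N\in\N$,
$$|f^{(j-k)}(x)|\le\Big(\sup_{t\in[0,1]}|f^{(j-k+N)}(t)|\Big)x^N\le\|f\|_{j-k+N}\,x^N.$$
Multiplying the two bounds, a typical term is at most $\binom{j}{k}M_k\|f\|_{j-k+N}\,x^{t(k)+N}$.

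The key point, and the only place where the argument is more than bookkeeping, is to absorb the negative powers coming from the exponents $t(k)$, which may well be negative. Because only finitely many indices $k\le n$ occur, I can choose a single $N=N(n)\in\N$ with $t(k)+N\ge 0$ for all $0\le k\le n$; then $x^{t(k)+N}\le 1$ on $(0,1]$, and each term is bounded by $\binom{j}{k}M_k\|f\|_{j-k+N}$, with no dependence on $x$ or on $i$. This is exactly where the flatness of $f$ is indispensable: it supplies arbitrarily high-order vanishing to compensate for the blow-up of the derivatives of $\omega_i$ near $0$.

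Finally I would collect the estimates. Setting $m:=n+N$ and using that the seminorms increase in their index, I obtain for each $0\le j\le n$ and $x\in(0,1]$
$$|(\omega_i f)^{(j)}(x)|\le\sum_{k=0}^j\binom{j}{k}M_k\|f\|_{j-k+N}\le\Big(\sum_{k=0}^n\binom{n}{k}M_k\Big)\|f\|_m=:C\|f\|_m.$$
Since $\omega_i f\in C_0^\infty([0,1])$ is flat at $0$, the same bound persists at $x=0$ by continuity, so that $\|M_{\omega_i}f\|_n\le C\|f\|_m$ for all $i$ and all $f$. As both $C$ and $m$ depend only on $n$ (through the fixed constants $M_k$ and the uniform choice of $N$), this is precisely the asserted equicontinuity of $(M_{\omega_i})_{i\in I}$. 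I expect the main obstacle to be the uniform choice of $N$ over the finitely many relevant differentiation orders; once that is settled, the remainder is a routine application of Leibniz's rule.
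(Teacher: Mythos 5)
Your proof is correct, but it follows a genuinely different route from the paper's. The paper argues softly: since $C_0^\infty([0,1])$ is a Fr\'echet (hence barrelled) space, the Banach--Steinhaus theorem reduces equicontinuity to checking that, for each \emph{fixed} $f$, the orbit $\{M_{\omega_i}f:\ i\in I\}$ is bounded; after the same Leibniz expansion, that boundedness follows at once because $x^t f^{(k-j)}$ again belongs to $C_0^\infty([0,1])$ (multiplication by $x^t$ preserves the space, Corollary \ref{monomials}), hence is bounded on $[0,1]$ --- no seminorm estimate with constants uniform in $f$ is ever produced. You instead verify equicontinuity directly: the iterated mean-value bound $|f^{(j-k)}(x)|\le \|f\|_{j-k+N}\,x^N$ quantifies the flatness of $f$, a single $N=N(n)$ absorbs the finitely many (possibly negative) exponents $t(k)$, $0\le k\le n$, and you end with the explicit estimate $\|M_{\omega_i}f\|_n\le C\|f\|_{n+N}$ uniformly in $i$. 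Your argument buys more: it avoids barrelledness altogether (so it would survive in non-barrelled locally convex settings), and it exhibits the loss of derivatives $n\mapsto n+N$ explicitly, whereas the paper's proof buys brevity by leaning on Banach--Steinhaus and on the multiplier facts already established. One shared caveat, which is an imprecision of the statement rather than of either proof: the Leibniz sum contains the term $k=0$, so both arguments use a uniform bound $|\omega_i(x)|\le M_0x^{t(0)}$ on the zeroth derivative as well; the hypothesis ``for each $j\in\N$'' should therefore be read as $j\in\N_0$, exactly as the paper itself does when it sets $M=\max\{M_j:\ 0\le j\le k\}$.
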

\begin{proof}
 By the Banach Steinhauss theorem, we only need to show that, for every $f\in C_0^\infty([0,1])$, the set
 $\{M_{\omega_i}(f):\ i\in I\}$ is bounded in $C_0^\infty([0,1])$. This happens when, for every  $k\in\N_0$, $\{(M_{\omega_i}f)^{(k)}(x):\ x\in [0,1]\}$ is bounded in $\C$. Let fix $k\in\N_0$.  Let $M=\max\{M_j:\ 0\leq j\leq k\}$, $t=\min\{t(j):\ 0\leq j\leq k\}$.
$$\max_{x\in [0,1]}|(\omega_if)^{(k}(x)|=\sup_{x\in (0,1]}\left |\sum_{j=0}^{k}\omega_i^{(j)}(x)f^{(k-j)}(x)\right|\leq M\sup_{x\in (0,1]}\sum_{j=0}^{k} x^t |f^{(k-j)}(x)|.$$

\noindent We conclude since $x^t\in \mathcal{M}$ by Corollary \ref{multipliers}.
\end{proof}

The following proposition is a direct consequence of Lemma \ref{eq}.

\begin{proposition}\label{eqex}
Let $K\subseteq \C$ be compact.  The multiplication operators $\{M_{h_\lambda}:\ \lambda\in K  \}$ form an equicontinuous subset of $\mathcal{L}(C^\infty_0([0,1]))$ in the following cases:

\begin{itemize}

\item[(a)] $h_\lambda(x):=f(\lambda)x^{\lambda}$, $f\in C(K)$.

\item[(b)] $h_\lambda(x):=f(\lambda)e^{g(\lambda) x^{t}}$, if  $t\geq 0$ and $f,g\in C(K)$.
\end{itemize}
\end{proposition}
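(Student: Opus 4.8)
The plan is to verify, in each of the two cases, the hypotheses of Lemma \ref{eq}: that each $h_\lambda$ belongs to $\mathcal M$ and that for every $j\in\N_0$ there are constants $M_j>0$ and $t(j)\in\R$, \emph{independent of $\lambda$}, with $|h_\lambda^{(j)}(x)|\le M_j x^{t(j)}$ for all $x\in(0,1]$ and all $\lambda\in K$. Observe first that the second property already yields the first: a bound $|h_\lambda^{(j)}(x)|\le M_j x^{t(j)}$ forces $h_\lambda^{(j)}(x)=o(x^{-n})$ for any $n>-t(j)$, so $h_\lambda\in\mathcal M$ by Lemma \ref{multipliers1}. Hence the entire proof reduces to producing uniform pointwise bounds on the derivatives of $h_\lambda$. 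Throughout I use that $K$ is compact, so $f$ and $g$ are bounded on $K$ and $\mathrm{Re}\,\lambda$ attains a minimum $a_0:=\min_{\lambda\in K}\mathrm{Re}\,\lambda$.

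For case (a) I would compute the derivatives explicitly: for $x\in(0,1]$ one has $x^\lambda=e^{\lambda\log x}$, whence $h_\lambda^{(j)}(x)=f(\lambda)\,\lambda(\lambda-1)\cdots(\lambda-j+1)\,x^{\lambda-j}$. Taking moduli and using $|x^{\lambda-j}|=x^{\mathrm{Re}\,\lambda-j}\le x^{a_0-j}$ (valid since $x\le 1$ and $\mathrm{Re}\,\lambda-j\ge a_0-j$), together with the boundedness on $K$ of $\lambda\mapsto |f(\lambda)|\,|\lambda(\lambda-1)\cdots(\lambda-j+1)|$, gives $|h_\lambda^{(j)}(x)|\le M_j x^{a_0-j}$ with $M_j$ independent of $\lambda$. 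Thus $t(j)=a_0-j$ works and Lemma \ref{eq} applies.

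For case (b) I would set $\phi_\lambda(x):=g(\lambda)x^t$, so that $h_\lambda=f(\lambda)e^{\phi_\lambda}$, and apply the Fa\`a di Bruno formula as in \eqref{welldefined}, namely $(e^{\phi_\lambda})^{(j)}=\sum_{i=1}^{j}e^{\phi_\lambda}B_{j,i}(\phi_\lambda',\dots,\phi_\lambda^{(j-i+1)})$, where $\phi_\lambda^{(k)}(x)=g(\lambda)\,t(t-1)\cdots(t-k+1)\,x^{t-k}$. There are two things to control. The exponential prefactor: since $t\ge 0$ and $x\in(0,1]$ we have $x^t\le 1$, so $|e^{\phi_\lambda(x)}|=e^{\mathrm{Re}(g(\lambda))x^t}\le e^{\sup_K|g|}$ uniformly in $x$ and $\lambda$. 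The Bell polynomials: each argument satisfies $|\phi_\lambda^{(k)}(x)|\le (\sup_K|g|)\,|t(t-1)\cdots(t-k+1)|\,x^{t-k}$, and on $(0,1]$ every such power is dominated by the single power $x^{t-(j-i+1)}$, so Remark \ref{bell} (after absorbing the harmless constants) furnishes $M>0$ and $s\in\R$, both independent of $\lambda$, with $|B_{j,i}(\phi_\lambda',\dots,\phi_\lambda^{(j-i+1)})|\le M x^{s}$. Multiplying the three bounds gives $|h_\lambda^{(j)}(x)|\le M_j x^{t(j)}$ uniformly in $\lambda\in K$, and Lemma \ref{eq} again applies.

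The only genuinely delicate point is the uniform boundedness of the exponential factor in (b), which is precisely where the hypothesis $t\ge 0$ is essential: for $t<0$ the quantity $x^t$ is unbounded as $x\to0^+$ and $e^{g(\lambda)x^t}$ may blow up whenever $\mathrm{Re}\,g(\lambda)>0$, so no bound of the required form could hold. Everything else is bookkeeping, the main care being to check that the constants produced by Remark \ref{bell} and by the compactness of $K$ depend only on $j$ (and on $t,f,g,K$) and not on the individual parameter $\lambda$.
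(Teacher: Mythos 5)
Your proof is correct and follows exactly the route the paper intends: the paper simply declares the proposition a direct consequence of Lemma \ref{eq}, and your argument is the detailed verification of that lemma's hypotheses (uniform bounds $|h_\lambda^{(j)}(x)|\le M_j x^{t(j)}$ on $(0,1]$, independent of $\lambda\in K$), using compactness of $K$, the explicit derivative formula in case (a), and Fa\`a di Bruno with Remark \ref{bell} plus the bound $x^t\le 1$ in case (b). Your observation that these bounds also give $h_\lambda\in\mathcal{M}$ via Lemma \ref{multipliers1} correctly closes the only detail the paper leaves implicit.
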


\begin{proposition}
\label{w}
Let $\omega\in \mathcal{M}$.  The Waelbroeck spectrum of $M_\omega$ is $\sigma^*(M_\omega)=\overline{\sigma(M_\omega)}=\overline{\omega((0,1])}$.
\end{proposition}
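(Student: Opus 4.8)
The plan is to establish the two equalities in two stages. Since the general inequality $\overline{\sigma(T)}\subseteq\sigma^*(T)$ recalled in the introduction gives $\overline{\sigma(M_\omega)}\subseteq\sigma^*(M_\omega)$ for free, the work reduces to (i) identifying $\overline{\sigma(M_\omega)}$ with $\overline{\omega((0,1])}$, and (ii) proving the reverse inclusion $\sigma^*(M_\omega)\subseteq\overline{\sigma(M_\omega)}$. By stage (i) the latter is equivalent to showing that every $\lambda_0\in\C\setminus\overline{\omega((0,1])}$ lies in the Waelbroeck resolvent $\varrho^*(M_\omega)$.

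For stage (i) I would read off the spectrum from Corollary \ref{c14}. As $\omega((0,1])\subseteq\sigma(M_\omega)$, taking closures gives $\overline{\omega((0,1])}\subseteq\overline{\sigma(M_\omega)}$. For the converse it suffices to check $\sigma(M_\omega)\subseteq\overline{\omega((0,1])}$. Let $\lambda\in\sigma(M_\omega)\setminus\omega((0,1])$; by Corollary \ref{c14} the function $x^n/(\lambda-\omega)$ is unbounded on $(0,1]$ for every $n\in\N$. If $\lambda$ were not in $\overline{\omega((0,1])}$, there would be $\delta>0$ with $|\lambda-\omega(x)|\geq\delta$ on $(0,1]$, whence $|x^n/(\lambda-\omega(x))|\leq\delta^{-1}$, contradicting unboundedness. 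Hence $\lambda\in\overline{\omega((0,1])}$ and $\overline{\sigma(M_\omega)}=\overline{\omega((0,1])}$.

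For stage (ii), fix $\lambda_0\in\C\setminus\overline{\omega((0,1])}$ and choose $r>0$ so that the closed disc $K:=\overline{B}(\lambda_0,r)$ satisfies $\dist(K,\overline{\omega((0,1])})\geq r$; in particular $|\lambda-\omega(x)|\geq r$ for all $\lambda\in K$ and $x\in(0,1]$. For each such $\lambda$ the multiplier $\lambda-\omega$ is nonvanishing with bounded reciprocal, so Corollary \ref{multipliers2} gives $K\subseteq\varrho(M_\omega)$ and $R(\lambda,M_\omega)=M_{h_\lambda}$ with $h_\lambda:=1/(\lambda-\omega)$. It then remains to show $\{M_{h_\lambda}:\lambda\in K\}$ is equicontinuous, for which I would verify the hypotheses of Lemma \ref{eq}: uniform power bounds $|h_\lambda^{(j)}(x)|\leq M_j x^{t(j)}$ on $(0,1]$, with $M_j,t(j)$ independent of $\lambda\in K$.

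The key computation, and the main obstacle, is precisely this uniform estimate. Applying the Fa\`a di Bruno formula to $1/(\lambda-\omega)$ exactly as in the proof of Corollary \ref{multipliers2} yields $h_\lambda^{(j)}(x)=\sum_{i=1}^{j}(-1)^i i!\,(\lambda-\omega(x))^{-i-1}B_{j,i}(-\omega'(x),\dots,-\omega^{(j-i+1)}(x))$. The parameter $\lambda$ enters only through $(\lambda-\omega(x))^{-i-1}$, which is bounded by $r^{-i-1}$ uniformly on $K$ by the choice of $r$; the Bell-polynomial factors depend on $\omega$ alone. Since $\omega\in\mathcal{M}$, Lemma \ref{multipliers1} gives $\omega^{(k)}(x)x^{n_k}\to 0$ for suitable $n_k$, and as $\omega^{(k)}$ is continuous on $(0,1]$ this product extends continuously to $[0,1]$, hence is bounded, giving $|\omega^{(k)}(x)|\leq C_k x^{-n_k}$. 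Remark \ref{bell} (absorbing the constants by the homogeneity of $B_{j,i}$) then produces a $\lambda$-independent power bound $|B_{j,i}(-\omega'(x),\dots)|\leq M'_{j,i}x^{s(j,i)}$. Combining these gives the required uniform bounds $|h_\lambda^{(j)}(x)|\leq M_j x^{t(j)}$, so Lemma \ref{eq} applies and $\{M_{h_\lambda}:\lambda\in K\}$ is equicontinuous. Thus $\lambda_0\in\varrho^*(M_\omega)$, which completes the inclusion $\sigma^*(M_\omega)\subseteq\overline{\sigma(M_\omega)}$ and hence the proposition.
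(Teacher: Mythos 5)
Your proposal is correct and follows essentially the same route as the paper: identify $\overline{\sigma(M_\omega)}$ with $\overline{\omega((0,1])}$ via Corollary \ref{c14}, then for $\lambda_0\notin\overline{\omega((0,1])}$ use a ball on which $|\lambda-\omega|$ is uniformly bounded below, the Fa\`a di Bruno formula together with Lemma \ref{multipliers1} and Remark \ref{bell} to get $\lambda$-independent power bounds on the derivatives of $(\lambda-\omega)^{-1}$, and Lemma \ref{eq} to conclude equicontinuity of the resolvents. The only cosmetic difference is that you argue the inclusion $\sigma(M_\omega)\subseteq\overline{\omega((0,1])}$ by contraposition (boundedness of $x^n/(\lambda-\omega)$ when $\lambda$ is far from the range), whereas the paper extracts a sequence $x_n\to 0$ with $\omega(x_n)\to\lambda$; these are the same observation.
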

\begin{proof}
We first observe  $\{\lambda:  x^n/(\lambda-\omega) \text{ unbounded in }(0,1] \ \forall n\in\N \}\subseteq \overline{\omega((0,1])}$. Actually, for any such $\lambda$ there must exist a sequence $(x_n)\subseteq (0,1]$ convergent to $0$ such that $\lim \omega(x_n)=\lambda$. Hence, by Corollary \ref{c14}, we have $\overline{\sigma(M_\omega)}=\overline{\omega((0,1])}$. We only need to show $\sigma^*(M_\omega)\subseteq \overline{\omega((0,1])}$. Let  $\lambda_0\in \C\setminus \overline{\omega((0,1])}$. We choose $r>0$ such that there exists $c>0$ satisfying
$$|\lambda-\omega(x)|>c\quad  \forall \lambda\in B(\lambda_0,r),\ x\in (0,1].$$

\noindent From Lemma \ref{multipliers1}, given $j\in\N_0$  there is $k\in\N$ and $C>0$ such that $|\omega^{(i)}(x)|\leq C x^{-k}$ for $1\leq i\leq j$. From this, Fa\`{a} di Bruno formula and Remark \ref{bell} we get $M>0$, $t\in \R$ such that, for every $\lambda\in B(\lambda_0,r)$, $x\in (0,1]$ we have 

$$|((\lambda-\omega(x))^{-1})^{(j)}|=  \left|\sum_{i=1}^{j} (\lambda-\omega(x))^{-i-1}B_{j,i}(\omega'(x)\cdots \omega^{(j-i+1)}(x))\right |\leq  M x^{t}.$$ 

\noindent Lemma \ref{eq}  gives the equicontinuity of $\{M_{(\lambda-\omega(x))^{-1}}:\ \lambda\in B(\lambda_0,r)\}$. Hence  $\lambda_0\in  \varrho^*(M_\omega)$.

\end{proof}

From Corollary \ref{c14} and Proposition \ref{w} we get the following:

\begin{example}
\label{iso2}
For $p\in \R$, let  $\omega_p(x)=x^p\in \mathcal{M}$.
\begin{itemize}

\item[(i)] If $p>0$ then $\sigma(M_{\omega_p})=(0,1]$ and $\sigma^*(M_{\omega_p})=[0,1]$
\item[(ii)] If $p<0$ then $\sigma(M_{\omega_p})= \sigma^*(M_{\omega_p})=[1,\infty)$.
\end{itemize}
\end{example}

\section{Spectrum of Ces\`{a}ro type operators on $C_0^\infty([0,1])$}

 In this section we study the spectrum of operators $VM_{\omega_p}$ and also $M_{\omega_p}V$, where $\omega_p=x^p$, $p\in\R$, $M_{\omega_p}$ is the multiplication operator and $V$ is the Volterra operator. These operators are isomorphisms in view of Proposition \ref{isomorphism} and Corollary \ref{multipliers2}. The relevant case $M_{\omega_{-1}}V$ gives the Ces\`{a}ro operator. By Proposition \ref{inverse}, the results will determine the spectrum of $DM_{\omega_p}$ and $M_{\omega_p}D$.

\begin{lemma}
\label{tecnic}
Let $g\in C_0^\infty([0,1])$, $q<0$,  and $c\in \C$ with $\text{Re}(c)>0$. Then $h_c(x):=\int_0^x e^{c(x^q-t^q)}g(t)dt\in C_0^\infty([0,1])$ and $\{h_c:\ c\in K\}$ is an equicontinuous subset of $C_0^\infty([0,1])$ for any compact set $K\subseteq \{z\in\C: \text{Re}(z)>0\}$.
\end{lemma}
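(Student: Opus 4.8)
The plan is to exploit two facts about the kernel $e^{c(x^q-t^q)}$ that make the super-polynomial flatness transparent. First, the sign of the exponent: for $0<t\le x\le 1$ and $q<0$ one has $t^q\ge x^q$, so $\text{Re}\bigl(c(x^q-t^q)\bigr)=\text{Re}(c)(x^q-t^q)\le 0$ whenever $\text{Re}(c)>0$, whence $|e^{c(x^q-t^q)}|\le 1$ on the whole triangle of integration. This gives at once the zeroth-order bound $|h_c(x)|\le\int_0^x|g(t)|\,dt$, and since $g\in C_0^\infty([0,1])$ yields $|g(t)|\le C_n t^n$ with $C_n$ independent of $c$, I obtain $|h_c(x)|\le C_n x^{n+1}$ for every $n$, with constants uniform in $c$. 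Second, differentiating under the integral sign on $(0,1]$ (the integrand and its $x$-derivative extend continuously by $0$ to $t=0$ for each fixed $x>0$, as $e^{c(x^q-t^q)}\to 0$ when $t\to 0^+$), using $\partial_x e^{c(x^q-t^q)}=cq\,x^{q-1}e^{c(x^q-t^q)}$ and the boundary value $e^{c(x^q-x^q)}=1$, I get the first-order linear relation
\[
h_c'(x)=g(x)+cq\,x^{q-1}h_c(x),\qquad x\in(0,1].
\]

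Iterating this identity, a routine induction shows that for every $k\in\N_0$
\[
h_c^{(k)}(x)=\sum_{i=0}^{k-1}p_{k,i}(x)\,g^{(i)}(x)+r_k(x)\,h_c(x),
\]
where each coefficient $p_{k,i},r_k$ is a finite sum of monomials $x^{s}$ (with real, possibly negative, exponents) times polynomials in $c$; the inductive step merely differentiates the coefficients and substitutes the first-order relation, so no derivative of $h_c$ of order $\ge 1$ ever survives. Each summand is then controlled by super-polynomial decay: $g^{(i)}(x)=o(x^n)$ for all $n$ because $g\in C_0^\infty([0,1])$, and $h_c(x)=o(x^n)$ for all $n$ by the bound above, so multiplying by a fixed power $x^{s}$ (Corollary \ref{monomials}) keeps every term $o(x^m)$ for each $m$. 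Hence $h_c^{(k)}(x)=o(x^n)$ as $x\to 0^+$ for all $k,n$; in particular every derivative tends to $0$ at $0$, so $h_c$ extends to a $C^\infty$ function on $[0,1]$ that is flat at $0$, i.e.\ $h_c\in C_0^\infty([0,1])$ by Proposition \ref{description}.

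For the second assertion over a compact $K\subseteq\{\,z:\text{Re}(z)>0\,\}$, I observe that every estimate above is uniform in $c$: the kernel bound $|e^{c(x^q-t^q)}|\le 1$ holds for all such $c$, giving $|h_c(x)|\le C_n x^{n+1}$ with $C_n$ free of $c$, while in the expansion of $h_c^{(k)}$ the polynomials in $c$ are bounded on $K$. Writing $s_k$ for the most negative exponent occurring among the coefficients at level $k$ and choosing $n\ge -s_k$, the products $x^{s}g^{(i)}(x)$ and $x^{s}h_c(x)$ are bounded on $[0,1]$ uniformly in $c\in K$, so $\sup_{c\in K}\sup_{x\in[0,1]}|h_c^{(k)}(x)|<\infty$ for every $k$. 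These uniform seminorm bounds are exactly the boundedness (equicontinuity) of $\{h_c:c\in K\}$ in $C_0^\infty([0,1])$, and as in the Banach--Steinhaus argument of Lemma \ref{eq} they pass to the equicontinuity of the associated family of operators. The point I expect to require the most care is precisely the presence of the negative powers $x^{q-1},x^{q-2},\dots$ produced by successive differentiation: each of them blows up at $0$ in isolation, and the whole mechanism rests on their always being paired with either $h_c$ or a derivative of $g$, both of which vanish to infinite order at $0$ and, thanks to the kernel bound, do so at a rate independent of $c$.
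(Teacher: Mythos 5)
Your proposal is correct and follows essentially the same route as the paper's own proof: the kernel bound $|e^{c(x^q-t^q)}|\le 1$ for $\text{Re}(c)>0$, the first-order relation $h_c'(x)=g(x)+cq\,x^{q-1}h_c(x)$, the induction expressing $h_c^{(k)}$ through $g,g',\dots,g^{(k-1)}$ and $h_c$ itself with coefficients that are polynomials in $c$ and in (possibly negative) powers of $x$, and the flatness of $g$ and $h_c$ absorbing those negative powers, with uniformity over $c\in K$ coming from boundedness of the coefficient polynomials on $K$. No changes needed.
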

\begin{proof}
From the hypothesis it follows that, for each $n\in\N$ there is $M_0^n>0$ such that, for each $c\in\C$ and $x\in [0,1]$, we have

$$|h_c(x)|\leq \int_0^x |g(t)| dt\leq M_0^n x^n.$$

\noindent The derivative satisfy $h_c'(x)=g(x)+cqx^{q-1}h_c(x)$. Inductively we get, for each $j\in\N$, polynomials $P_i^j$ of three variables, $0\leq i\leq j$ such that

$$h_c^{(j)}(x)= g^{(j-1)}(x)+\sum_{i=0}^{j-2}P_i^j(x^{-1},x^q,c)g^{(i)}(x)+P_{j-1}^{j}(x^{-1},x^q,c)h_c(x).$$

\noindent Hence we conclude $h_c(x)\in C_0^\infty([0,1])$ from Proposition \ref{description}. The continuity of each $P_i(x^{-1},x^q,c)$ with respect to $c$ yields that, for each $j,n\in\N_0$, there exist constants $M_j^n>0$ such that, for each $c\in K$, $x\in [0,1]$ 

$$|h_c^{(j)}(x)|\leq M_j^n x^n\leq M_j^n.$$

\end{proof}
\begin{theorem}
\label{wes}
For $p\in\R$, consider the operator $T_p:=M_{\omega_p}V:\ C_{0}^{\infty}([0,1])\to C_{0}^{\infty}([0,1])$, $f\mapsto x^{p}\int_0^xf(t)dt$,  (or $T_p:=VM_{\omega_p}:\ C_{0}^{\infty}([0,1])\to C_{0}^{\infty}([0,1])$, $f\mapsto \int_0^x t^{p}f(t)dt$). 

\begin{itemize}
\item[(i)] If $p \geq -1$ then $\sigma(T_p)=\emptyset$ and $\sigma^*(T_p)=\{0\}$.
\item[(ii)] If $p<-1$ then $\sigma(T_p)=\sigma_p(T_p)=\{\lambda\in\C:\ Re(\lambda)> 0\}$ and $\sigma^*(T_p)=\overline{\sigma(T_p)}$.
\end{itemize}
\end{theorem}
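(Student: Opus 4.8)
The plan is to treat both operators $M_{\omega_p}V$ and $VM_{\omega_p}$ simultaneously, since Proposition \ref{esp2} (with $B=M_{\omega_p}$ an isomorphism by Corollary \ref{multipliers2}) guarantees $\sigma(M_{\omega_p}V)=\sigma(VM_{\omega_p})$, $\sigma_p(M_{\omega_p}V)=\sigma_p(VM_{\omega_p})$ and likewise for the Waelbroeck spectra. So I would fix one representative, say $T_p=M_{\omega_p}V$, $T_pf(x)=x^p\int_0^x f(t)\,dt$, and compute everything for it.

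The first move is to identify the point spectrum by solving the eigenvalue equation $T_pf=\lambda f$ explicitly. For $\lambda\neq 0$ the equation $x^p\int_0^x f(t)\,dt=\lambda f(x)$ can be differentiated: writing $F(x)=\int_0^x f$, one gets $x^p F(x)=\lambda F'(x)$, a first-order linear ODE whose solutions are $F(x)=C\exp\!\big(\tfrac{1}{\lambda}\int x^p\,dx\big)$. When $p\geq -1$ the antiderivative $\int x^p\,dx$ is a power $x^{p+1}/(p+1)$ (or $\log x$ when $p=-1$) that stays bounded below away from blowing up near $0$ in the relevant sense, and the resulting candidate eigenfunction fails to be flat at $0$ (it does not lie in $C_0^\infty([0,1])$ by the characterization in Proposition \ref{description}), forcing $C=0$ and hence no eigenvalues; this should give $\sigma_p(T_p)=\emptyset$ in case (i). When $p<-1$, set $q=p+1<0$, so $F(x)=C\exp\!\big(\tfrac{1}{\lambda q}x^{q}\big)$; since $q<0$ we have $x^q\to+\infty$ as $x\to 0^+$, so the exponential decays to $0$ faster than any power precisely when $\mathrm{Re}(1/\lambda\, q)<0$, i.e. when $\mathrm{Re}(\lambda)$ and $q$ have opposite signs, i.e. $\mathrm{Re}(\lambda)>0$. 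This is exactly where Lemma \ref{tecnic} enters: it certifies that the resulting $f=F'$ (which involves $e^{c\,x^q}$ with $\mathrm{Re}(c)>0$) genuinely lies in $C_0^\infty([0,1])$, giving $\sigma_p(T_p)=\{\lambda:\mathrm{Re}(\lambda)>0\}$ for case (ii).

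Next I would upgrade the point spectrum to the full spectrum. Because $T_p$ is an isomorphism on the Fr\'echet space $C_0^\infty([0,1])$, the open-mapping remark in the introduction says that for $\lambda\notin\sigma_p(T_p)$ we have $\lambda\in\sigma(T_p)$ iff $\lambda I-T_p$ fails to be surjective. So for case (i) I must show $\lambda I-T_p$ is surjective for every $\lambda$ (including $\lambda=0$, handled directly since $T_p$ itself is onto), and for case (ii) I must show surjectivity for every $\lambda$ with $\mathrm{Re}(\lambda)\leq 0$, $\lambda\neq 0$, and then verify separately that $0\notin\sigma(T_p)$ (again from invertibility of $T_p$). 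Surjectivity amounts to solving $(\lambda I-T_p)f=h$ for given $h\in C_0^\infty([0,1])$; rewriting via the ODE as before, this becomes an inhomogeneous first-order linear equation whose solution by variation of parameters is an explicit integral of the form $e^{c\,x^q}\int_0^x e^{-c\,t^q}(\cdots)\,dt$, and the content is exactly to check that this solution stays in $C_0^\infty([0,1])$. I expect this to be the main obstacle: one must show the explicit resolvent formula produces a function flat at $0$, which is precisely the type of integral estimate packaged in Lemma \ref{tecnic}, applied now with the correct sign of $\mathrm{Re}(c)$ determined by the half-plane in which $\lambda$ lies.

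Finally, for the Waelbroeck spectrum I would argue $\sigma^*(T_p)=\overline{\sigma(T_p)}$ in both cases. Since $\overline{\sigma(T_p)}\subseteq\sigma^*(T_p)$ always, only the reverse inclusion needs work: for each $\lambda_0\notin\overline{\sigma(T_p)}$ I choose a closed ball $K=\overline{B(\lambda_0,r)}$ disjoint from the spectrum and show the family $\{R(\lambda,T_p):\lambda\in K\}$ is equicontinuous. Using the explicit resolvent integral formula derived for surjectivity, the resolvents are built from kernels $e^{c(\lambda)(x^q-t^q)}$ with $\mathrm{Re}(c(\lambda))>0$ uniformly on $K$ (in case (ii), away from the imaginary axis) together with multiplication by powers; the uniform bounds of Lemma \ref{tecnic} over the compact parameter set $K$, combined with the equicontinuity criterion of Lemma \ref{eq} and Proposition \ref{eqex}, deliver the equicontinuity. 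In case (i) the only boundary point is $\lambda=0$, and I would confirm $0\in\sigma^*(T_p)$ by showing the resolvents cannot be equicontinuous as $\lambda\to 0$ — consistent with the Volterra behaviour $\sigma^*(V)=\{0\}$ from Proposition \ref{isomorphism}(b), which is the special case $p=0$.
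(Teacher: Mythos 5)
Your strategy is essentially the paper's own: reduce to $T_p=M_{\omega_p}V$ via Proposition \ref{esp2}, read off the point spectrum from the first--order ODE, prove surjectivity of $\lambda I-T_p$ by the explicit variation-of-parameters solution, and get the Waelbroeck statements from equicontinuity of the explicit resolvents. However, there is one genuine gap: in case (i) you never actually prove $0\in\sigma^*(T_p)$. Your opening claim that $\sigma^*(T_p)=\overline{\sigma(T_p)}$ holds \emph{in both cases} is false in case (i), where $\overline{\sigma(T_p)}=\emptyset$ but the assertion is $\sigma^*(T_p)=\{0\}$; and your later sentence that you ``would confirm $0\in\sigma^*(T_p)$ by showing the resolvents cannot be equicontinuous as $\lambda\to 0$'' is a restatement of the goal, not an argument, while the appeal to Proposition \ref{isomorphism}(b) covers only $p=0$. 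This point is not automatic: on this very space the isomorphism $D$ satisfies $\sigma(D)=\sigma^*(D)=\emptyset$ (Proposition \ref{isomorphism}(a)), so an isomorphism with empty spectrum need not have $0$ in its Waelbroeck spectrum --- indeed the inverses $M_{\omega_{-p}}D$ of your operators have empty Waelbroeck spectrum for $-p\le 1$ by Theorem \ref{dif}. The paper settles it with a concrete witness: take $f\in C_0^\infty([0,1])$ with $f\geq 0$ and $\int_0^1f(t)\,dt=1$; then, for $p=-1$ and real $0<\lambda<1$,
\begin{equation*}
\left\langle \delta_1,\ \tfrac1\lambda M_{\omega_{1/\lambda}}VM_{\omega_{-1/\lambda}}(f)\right\rangle
=\frac1\lambda\int_0^1 t^{-1/\lambda}f(t)\,dt\ \geq\ \frac1\lambda,
\end{equation*}
which tends to $\infty$ as $\lambda\downarrow 0$; since $VR(\lambda,T_{-1})=\tfrac1\lambda M_{\omega_{1/\lambda}}VM_{\omega_{-1/\lambda}}$ and $V$ is continuous, the family $\{R(\lambda,T_{-1})f:\ 0<\lambda<1\}$ is unbounded, so $0\notin\varrho^*(T_{-1})$ (and analogously for $p>-1$, using $e^{(1-t^{p+1})/(\lambda(p+1))}\geq 1$). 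Without some such computation, case (i) is incomplete.

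A secondary, repairable inaccuracy is the assignment of tools. Lemma \ref{tecnic} assumes $q<0$ and $\mathrm{Re}(c)>0$, so it is available only in case (ii) and only for $\mathrm{Re}(\lambda)<0$. In case (i), where $q=p+1\geq 0$, and likewise for purely imaginary $\lambda\neq 0$ in case (ii) (where $\mathrm{Re}(c)=0$), flatness of the variation-of-parameters solution is not an integral estimate at all: it follows, as in the paper, by factoring the resolvent as $\tfrac1\lambda DM_{h_{\lambda,p}}VM_{h_{-\lambda,p}}$ (respectively $\tfrac1\lambda DM_{\omega_{1/\lambda}}VM_{\omega_{-1/\lambda}}$ when $p=-1$), where the relevant exponentials and powers are multipliers by Lemma \ref{multipliers1} and Corollary \ref{multipliers2}, with equicontinuity then coming from Proposition \ref{eqex}(a)/(b) rather than from Lemma \ref{tecnic}. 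With these two repairs --- the multiplier factorization where Lemma \ref{tecnic} does not apply, and above all the explicit witness showing $0\in\sigma^*(T_p)$ in case (i) --- your outline matches the paper's proof.
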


\begin{proof}
By Proposition \ref{esp2}, we only have to prove the statement for $T_p=M_{\omega_p}V$. Since $T_p$ is an (composition of) isomorphism, then $0\in \varrho(T_p)$ for each $p\in\R$. Let $\lambda\neq 0$, $T_p(f)=\lambda f$ for some $f\neq 0$ and only if

\begin{equation}
\label{ps}
V(f)(x)=\lambda x^{-p}f(x)
\end{equation}

\noindent For $p\neq -1$, since $f=DV(f)$ for every $f \in C_{0}^{\infty}([0,1])$, a solution of $\eqref{ps}$ is of the form $V(f)(x)=h_{\lambda,p}(x):=e^{\frac{x^{p+1}}{\lambda (p+1)}}$,  and for $p=-1$ we get $V(f)(x)=h_{\lambda,p}(x):=\omega_{\frac{1}{\lambda}}(x)=x^{\frac1\lambda}$.  
Hence $\lambda\in \sigma_p(T_p)$ if and only if $(h_{\lambda,p})' \in C_0([0,\infty])$, which is equivalent to $h_{\lambda,p} \in C_0([0,\infty])$.

\noindent We check first (i). For any $p\geq -1$, $h_{\lambda,p}\notin C_{0}^{\infty}([0,1])$, therefore. $\lambda I-T_p$ is injective for every $\lambda\in\C$.

 Let $p=-1$, which corresponds to the Ces\`aro operator $C(f):=T_{-1}(f)=\frac{1}{x}\int_0^xf(t)dt$. We see now that, for every $\lambda\in\C$, $\lambda I-C$ is surjective on $C_0^\infty([0,1])$. Let $g\in C_0^\infty([0,1])$. The function 
 
 \begin{equation}
 \label{resolvente}
\hat{g}(x):=\frac1\lambda x^{1/\lambda}\int_0^x g(t)t^{-\frac{1}{\lambda}}dt,
 \end{equation}
 
\noindent  is a solution of the differential equation
 
 $$\lambda y'-\frac1x y=g.$$
 
\noindent By putting $y=V(f)$ in the equation we get

 $$\lambda f-C(f)=g.$$
 
 \noindent Proposition \ref{isomorphism} and Corollary \ref{multipliers2} yield  $\hat{g}\in C_0^\infty([0,1])$, and thus $\lambda I-C$ is surjective. Since we already know that $\lambda I-C$ is injective, we conclude that  $\lambda I- C$ is an isomorphism, i.e. $\lambda\notin \sigma(C)$.  We have proved then $\sigma(C)=\emptyset$. Since $V$ is an isomorphism on $C_0^\infty([0,1])$ whose inverse is $D$, we have  
 $$R(\lambda, C)(g)=\frac{1}{\lambda}DM_{\omega_{1/\lambda}}V(M_{\omega_{-1/\lambda}}(f)).$$

   \noindent Let $K\subset \C\setminus\{0\}$ and $f\in C_0^\infty([0,1])$. We define
 
 $$
 B(K,f):=\left\{R(\lambda,C)f: \lambda\in K\right\}=\left\{\frac{1}{\lambda}DM_{\omega_{1/\lambda}}V(M_{\omega_{-1/\lambda}}(f)):\lambda \in K\right\}
 $$

 \noindent  Proposition \ref{eqex} (a) together with the fact that equicontinuous sets are equibounded, yields the boundedness of $B(K,f)$. Hence $\C\setminus\{0\}\subseteq \varrho^*(C)$, and consequently $\sigma^*(C)\subseteq \{0\}$. Let  $f\in C_0^\infty([0,1])$, such that $f\geq 0$ and $\int_0^1 f(t)dt=1$. For every $\lambda\in\C$ we have
 
 $$\frac{1}{\lambda}M_{\omega_{1/\lambda}}VM_{\omega_{-1/\lambda}}(f)(x)=\frac1\lambda\int_0^x \left(\frac{x}{t}\right)^\frac{1}{\lambda}f(t)dt.$$
 
 \noindent Hence, for $\delta_1\in C_0^\infty([0,1])'$ being the evaluation functional at 1, and $0<\lambda<1$, we get
 
 $$\left\langle \delta_1, \frac1\lambda M_{\omega_{-1/\lambda}}VM_{\omega_{1/\lambda}}(f)\right\rangle=\frac{1}{\lambda}\int_0^1\left(\frac{1}{t}\right)^\frac{1}{\lambda}f(t)dt\geq \frac1\lambda.$$
 
 \noindent From this we conclude $0\notin \varrho^*(C)$, i.e. $\sigma^*(C)=\{0\}$.

 For $p>-1$, we proceed analogously in order to get
 
 \begin{equation}
 \label{resp}
  R(\lambda,T_p)(g)=\frac1\lambda DM_{h_{\lambda,p}}VM_{h_{-\lambda,p}}(g),
  \end{equation}
  
 \noindent  where $h_{\lambda,p}(x):=e^{\frac1\lambda\frac{x^{p+1}}{p+1}}.$ The conclusion is obtained analogously to the case $p=-1$, by using Proposition \ref{eqex} (b).
 
 We see now (ii). Let $p<-1$. Now $h_{\lambda,p}(x)=e^{\frac{x^{p+1}}{\lambda (p+1)}}$ belongs to $C_0^\infty([0,1])$ if and only if $\text{Re}(\lambda)>0$.
 Hence $\sigma_p(T_p)=\{\lambda: \ \text{Re}(\lambda)>0\}$. 
 
 Let $\lambda\in \C\setminus\{0\}$ such that $\text{Re}(\lambda)\leq 0$. When exists, the resolvent $R(\lambda ,T_p)$ must satisfy again \eqref{resp}. For any $g\in C_0^\infty([0,1])$, we define
 
$$ \hat{g}_\lambda(x):=\frac{1}{\lambda}\int_0^x e^{\frac{x^{p+1}-t^{p+1}}{\lambda (p+1)}}g(t)dt.$$

\noindent We observe $v_\lambda(t):=e^{\frac{x^{p+1}-t^{p+1}}{\lambda (p+1)}}\in \mathcal{M}$ when $\lambda\neq 0$ and $\text{Re}(\lambda)\leq 0$.  Now we have
 $R(\lambda,T_p)(g)(x)=D(\hat{g}_\lambda(x))=DVM_{v_\lambda}(g)$ for every $\lambda\in \{z\in \C:\ \text{Re}z \leq 0\}$. Hence $\sigma(T_p)=\sigma_p(T_p)=\{\lambda\in\C: \ \text{Re}(\lambda)>0\}$.
 
 \noindent  Let $K\subseteq \{z\in\C:\ \text{Re}(z)<0\}$  compact. From Lemma \ref{tecnic} applied to $c=1/(\lambda (p+1))$ it follows that  $\{\hat{g}_\lambda: \lambda\in K$ is bounded in  $C_0^\infty([0,1])\}$.    Since $D$ is an isomorphism, then
 also $
\left\{R(\lambda,T_{p})f: \lambda\in K\right\}=\left \{ D(\hat{g}_\lambda(x)): \ \lambda\in K \subseteq C_0^\infty([0,1])\right \}$
  is bounded.  Hence we conclude $\{\lambda\in\C:\  \text{Re}(\lambda)<0\}\subseteq\varrho^*(T_p)$. Since $\overline{\sigma}(T_p)\subseteq \sigma^*(T_p)$
  we conclude $\sigma^*(T_p)= \{\lambda \in\C:\  \text{Re}(\lambda)\geq 0\}$.

\end{proof}
\begin{theorem}
\label{dif}
For $p\in\R$, let $T_p:=M_{\omega_p}D:\ C_{0}^{\infty}([0,1])\to C_{0}^{\infty}([0,1])$, $f(x)\mapsto x^{p}f'(x)dt$,  (or $T_p:=DM_{\omega_p}:\ C_{0}^{\infty}([0,1])\to C_{0}^{\infty}([0,1])$, $f(x)\mapsto  (x^{p}f(x))'dt$). 

\begin{itemize}
\item[(i)] If $p \leq 1$ then $\sigma(T_p)=\sigma^*(T_p)=\emptyset$.
\item[(ii)] If $p>1$ then $\sigma(T_p)=\sigma_p(T_p)=\{\lambda\in\C:\ Re(\lambda)> 0\}$ and $\sigma^*(T_p)=\overline{\sigma(T_p)}$.
\end{itemize}
\end{theorem}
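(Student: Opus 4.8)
The plan is to recognise the operators of Theorem~\ref{dif} as the inverses of those of Theorem~\ref{wes} and to transport the spectral data through Propositions~\ref{inverse} and~\ref{esp2}, handling only the point $\lambda=0$ of the Waelbroeck spectrum by hand. First I would use Proposition~\ref{esp2}: since $M_{\omega_q}$ and $D$ are isomorphisms, $\sigma_p$, $\sigma$ and $\sigma^*$ agree for $M_{\omega_q}D$ and $DM_{\omega_q}$, so I may work with whichever form is convenient. The key algebraic observation is that these operators are precisely the inverses of the ones in Theorem~\ref{wes}: writing $S_p:=M_{\omega_p}V$, one has $S_p^{-1}=V^{-1}M_{\omega_p}^{-1}=DM_{\omega_{-p}}$, hence $T_q=DM_{\omega_q}=S_{-q}^{-1}$. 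Note that $q\le 1\iff -q\ge -1$ and $q>1\iff -q<-1$, so the two regimes of Theorem~\ref{dif} correspond exactly to those of Theorem~\ref{wes}.

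Second, I would apply Proposition~\ref{inverse} to $S_{-q}$ and $T_q=S_{-q}^{-1}$. For the point spectrum, the elementary equivalence $S_{-q}f=\mu f$ with $\mu\neq0$ $\iff$ $T_q f=\mu^{-1}f$ gives $\sigma_p(T_q)=\{\mu^{-1}:\mu\in\sigma_p(S_{-q})\}$, while Proposition~\ref{inverse} yields the same transformation for $\sigma$ and for $\sigma^*\setminus\{0\}$. Substituting the values of Theorem~\ref{wes}: in regime (i) ($-q\ge-1$) one has $\sigma(S_{-q})=\emptyset$ and $\sigma^*(S_{-q})=\{0\}$, so $\sigma(T_q)=\emptyset$ and $\sigma^*(T_q)\setminus\{0\}=\emptyset$, i.e. $\sigma^*(T_q)\subseteq\{0\}$. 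In regime (ii) ($-q<-1$) one has $\sigma(S_{-q})=\sigma_p(S_{-q})=\{\text{Re}>0\}$ and $\sigma^*(S_{-q})=\{\text{Re}\ge0\}$; since $\text{Re}(\mu^{-1})=\text{Re}(\mu)/|\mu|^2$, the inversion maps the open right half-plane onto itself and the punctured closed right half-plane onto itself, giving $\sigma(T_q)=\sigma_p(T_q)=\{\text{Re}>0\}$ and $\sigma^*(T_q)\setminus\{0\}=\{\text{Re}\ge0\}\setminus\{0\}$.

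Third comes the point $\lambda=0$, about which Proposition~\ref{inverse} is silent; this is the main obstacle. In regime (ii) it is automatic: $0\in\overline{\sigma(T_q)}\subseteq\sigma^*(T_q)$, so with the previous step $\sigma^*(T_q)=\{\text{Re}\ge0\}=\overline{\sigma(T_q)}$, as claimed. In regime (i) I must instead prove $0\in\varrho^*(T_q)$ directly. Using the form $M_{\omega_q}D$ (legitimate by Proposition~\ref{esp2}), I would solve $(\lambda I-T_q)f=g$, i.e. $x^qf'-\lambda f=-g$, by an integrating factor to obtain, for $q<1$,
$$R(\lambda,T_q)g=-M_{u_\lambda}\,V\,M_{\omega_{-q}}\,M_{w_\lambda}g,\qquad u_\lambda(x)=e^{\lambda x^{1-q}/(1-q)},\ \ w_\lambda(t)=e^{-\lambda t^{1-q}/(1-q)},$$
and, for $q=1$, $R(\lambda,T_1)g=-M_{\omega_{\lambda-1}}VM_{\omega_{-\lambda}}g$. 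Over a compact disc $K=\overline{B(0,r)}$ the exponential factors $\{M_{u_\lambda}\},\{M_{w_\lambda}\}$ are equicontinuous by Proposition~\ref{eqex}(b) (here $1-q\ge0$ makes the exponent nonnegative, which is exactly why the argument is confined to $q\le1$), the power factors $\{M_{\omega_{\lambda-1}}\},\{M_{\omega_{-\lambda}}\}$ are equicontinuous by Proposition~\ref{eqex}(a), and $V,M_{\omega_{-q}}$ are fixed continuous operators; since a composition of equicontinuous families is again equicontinuous, $\{R(\lambda,T_q):\lambda\in K\}$ is equicontinuous, whence $0\in\varrho^*(T_q)$ and $\sigma^*(T_q)=\emptyset$.

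I expect the real difficulty to be precisely this $\lambda=0$ analysis in regime (i): writing down the explicit resolvent, checking that it genuinely maps into $C_0^\infty([0,1])$ (the multipliers $u_\lambda,\omega_{-q},w_\lambda$ are bona fide elements of $\mathcal{M}$ by Corollary~\ref{multipliers2} and Lemma~\ref{multipliers1}, and $V$ preserves the space), and observing that the inequality $1-q\ge0$ is what makes Proposition~\ref{eqex}(b) applicable. This sign condition is the structural reason the threshold sits at $q=1$, matching the threshold $p=-1$ of Theorem~\ref{wes} under $p=-q$.
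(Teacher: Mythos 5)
Your proposal is correct and takes essentially the same route as the paper: all assertions except $0\in\varrho^*(T_p)$ for $p\le 1$ are obtained by transporting Theorem \ref{wes} through Propositions \ref{inverse} and \ref{esp2}, and that remaining point is settled exactly as in the paper, via an explicit resolvent formula written as a composition of $V$ with multiplication operators whose equicontinuity over compact sets comes from Proposition \ref{eqex}(a)/(b). One harmless mislabel: your $q=1$ formula $-M_{\omega_{\lambda-1}}VM_{\omega_{-\lambda}}$ is the resolvent of $DM_{\omega_1}$ rather than of $M_{\omega_1}D$ (whose resolvent is $-M_{\omega_\lambda}VM_{\omega_{-\lambda-1}}$), but since the two families are conjugate by the fixed isomorphism $M_{\omega_1}$, equicontinuity of one yields it for the other, so your conclusion stands.
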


\begin{proof}
Combining Proposition \ref{inverse} and Theorem \ref{wes} we obtain all the statements except $0\in \varrho^*(T_p)$ when $p\leq 1$. For $p<1$, the resolvent  $R(\lambda,T_p)$ can be directly computed:
$$R(\lambda,T_p)(f)=-x^p \int_0^x  e^{\frac{\lambda (x^{-p+1}-t^{-p+1}) }{-p+1}}f(t)dt=-M_{\omega_p}M_{h_{\lambda,-p}}V M_{h_{-\lambda,-p }}(f),$$

\noindent for $h_{\lambda,-p}(x):=e^{ \frac{\lambda x^{-p+1} }{-p+1} }$, $\omega_p(x):=x^p$. The equicontinuity of $\{R(\lambda,T_p)(f):\ \lambda\in K\}$ for each $f\in  C_{0}^{\infty}([0,1])$
and $K\subseteq \C$ compact follows from Proposition \ref{eqex} (b) and the fact that equicontinuous sets are equibounded. 

For $p=1$, we have 

$$R(\lambda,T_1)(f)=-x\int_0^x  \left(\frac{x}{t}\right)^\lambda f(t)dt=-M_{\omega_1}M_{\omega_\lambda}V M_{\omega_{-\lambda}}(f),$$

\noindent and we conclude in an analogous way using Proposition \ref{eqex} (a).
\end{proof}

We finish with a description of the spectrum of the differentiation operator in the one sided Schwartz class, and we observe that the composition of these operators with multiplication by monomials (or other powers of $x$) can be described with the same arguments.  

\begin{theorem}
Let consider the differentiaton operator $D: \ S([1,\infty))\to S([1,\infty))$, $f\mapsto f'$ and its inverse $I:\ S([1,\infty))\to S([1,\infty))$, $f\mapsto -\int_x^\infty f(t)dt$.
\begin{itemize}
\item[(i)] $\sigma(D)=\sigma_p(D)=\{\lambda\in\C: \ \text{Re}(\lambda)<0\}$ and $\sigma^*(D)=\overline{\sigma(D)}$.
\item[(ii)] $\sigma(I)=\sigma_p(I)=\{\lambda\in\C: \ \text{Re}(\lambda)<0\}$ and $\sigma^*(I)=\overline{\sigma(I)}$.
\end{itemize}
\end{theorem}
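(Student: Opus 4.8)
The plan is to avoid any fresh ODE estimates by transporting $D$ through the isomorphism $T$ of Theorem \ref{representation} onto an operator on $C_0^\infty([0,1])$ whose spectrum has already been computed in Theorem \ref{dif}. Recall that $Tf(x)=f(1/x)$ and that $T^{-1}$ is given by the same formula $T^{-1}h(y)=h(1/y)$. A single application of the chain rule gives, for $f\in C_0^\infty([0,1])$,
$$(T^{-1}DTf)(y)=(DTf)(1/y)=-y^2 f'(y),$$
so that $T^{-1}DT=-M_{\omega_2}D$ with $\omega_2(y)=y^2$. Thus $D$ on $S([1,\infty))$ is similar, through the fixed isomorphism $T$, to the operator $-M_{\omega_2}D$ on $C_0^\infty([0,1])$, which is the case $p=2>1$ of Theorem \ref{dif}.

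Next I would use that all three spectra are invariant under similarity and behave predictably under multiplication by $-1$. For a topological isomorphism $\Phi$ one has $\lambda I-\Phi^{-1}B\Phi=\Phi^{-1}(\lambda I-B)\Phi$ and $R(\lambda,\Phi^{-1}B\Phi)=\Phi^{-1}R(\lambda,B)\Phi$; since conjugation by the fixed pair $\Phi,\Phi^{-1}$ preserves bijectivity, injectivity and equicontinuity, this yields $\sigma(\Phi^{-1}B\Phi)=\sigma(B)$, $\sigma_p(\Phi^{-1}B\Phi)=\sigma_p(B)$ and $\sigma^*(\Phi^{-1}B\Phi)=\sigma^*(B)$. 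Likewise $\mu I-(-B)=-((-\mu)I-B)$ and $R(\mu,-B)=-R(-\mu,B)$, whence $\sigma(-B)=-\sigma(B)$, $\sigma_p(-B)=-\sigma_p(B)$ and $\sigma^*(-B)=-\sigma^*(B)$. Feeding in Theorem \ref{dif}(ii), namely $\sigma(M_{\omega_2}D)=\sigma_p(M_{\omega_2}D)=\{\text{Re}(\lambda)>0\}$ and $\sigma^*(M_{\omega_2}D)=\{\text{Re}(\lambda)\geq 0\}$, and multiplying the spectra by $-1$, gives $\sigma(D)=\sigma_p(D)=\{\text{Re}(\lambda)<0\}$ and $\sigma^*(D)=\{\text{Re}(\lambda)\leq 0\}=\overline{\sigma(D)}$, which is exactly (i).

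For (ii) I would invoke Proposition \ref{inverse} with $T:=D$ and $T^{-1}=I$. Since $D$ is an isomorphism, $\sigma_p(I)=\{\lambda^{-1}:\lambda\in\sigma_p(D)\}$ and $\sigma(I)=\{\lambda^{-1}:\lambda\in\sigma(D)\}$. The map $\lambda\mapsto\lambda^{-1}$ satisfies $\text{Re}(\lambda^{-1})=\text{Re}(\lambda)/|\lambda|^2$, so it is an involution of the open left half-plane onto itself; hence $\sigma(I)=\sigma_p(I)=\{\text{Re}(\lambda)<0\}$. Finally, Proposition \ref{inverse} gives $\sigma^*(I)\setminus\{0\}=\{\lambda^{-1}:\lambda\in\sigma^*(D)\setminus\{0\}\}$, and since $\lambda\mapsto\lambda^{-1}$ likewise maps the punctured closed left half-plane onto itself, $\sigma^*(I)\setminus\{0\}=\{\text{Re}(\lambda)\leq 0\}\setminus\{0\}$; because $0\in\overline{\sigma(I)}\subseteq\sigma^*(I)$ automatically, I conclude $\sigma^*(I)=\{\text{Re}(\lambda)\leq 0\}=\overline{\sigma(I)}$.

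The computation is essentially mechanical once the conjugation identity is in place; the only genuinely delicate point is the boundary value $\lambda=0$ in (ii), where Proposition \ref{inverse} gives no direct information about $\sigma^*(I)$. That gap is closed not by inversion but by the always-valid inclusion $\overline{\sigma(I)}\subseteq\sigma^*(I)$, which forces $0$ into the Waelbroeck spectrum. I would therefore single out this inclusion as the one step that must be argued separately rather than read off from Theorem \ref{dif}.
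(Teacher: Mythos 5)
Your proposal is correct and takes essentially the same route as the paper: conjugation by the isomorphism $T$ of Theorem \ref{representation} identifies $D$ with $-M_{\omega_2}D$ on $C_0^\infty([0,1])$, so that (i) follows from Theorem \ref{dif}(ii), and (ii) from Proposition \ref{inverse}. Your explicit treatment of the point $0$ in $\sigma^*(I)$ via the inclusion $\overline{\sigma(I)}\subseteq\sigma^*(I)$ merely spells out a step the paper leaves implicit, since Proposition \ref{inverse} alone says nothing about $0$.
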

\begin{proof}
Observe that, by means of the isomorphism defined on Proposition \ref{description}, $D$ is equivalent (making and abuse of notation)  to $-M_{\omega_2} D: C_0^\infty([0,1])\to C_0^\infty([0,1])$, $f\mapsto -x^2f'(x)$.  Now (i) follows now  from Theorem \ref{dif} (ii). Statement (ii) is a consequence of Proposition \ref{inverse} and (i).
\end{proof}

\noindent {\bf Acknowledgements.} The author would like to thank J. Bonet for his careful reading of the preprint and the given suggestions which really improved the work. In particular, he observed and  provided the arguments showing that some results stated for particular operators could be written in a very general and abstract way. These ideas leaded to Subsection 1.2. Also Example 3 given in this subsection was obtained in a joint discussion. Also it is worth to mention that L. Frerick suggested the representation of the space of flat functions given in Theorem \ref{representation}  some time ago. This research was supported by PID2020-119457GB-I00 and GVA-AICO/2021/170.

\noindent {\bf Data Availability.} Data sharing not applicable to this article as no datasets were generated or analysed during the current study.

\end{document}